\newtheorem{thm}{Theorem}[section]
\newtheorem{cor}[thm]{Corollary}
\newtheorem{lem}[thm]{Lemma}
\newtheorem{prop}[thm]{Proposition}
\newtheorem{conj}[thm]{Conjecture}
\newtheorem{exm}[thm]{Example}
\newtheorem{defn}[thm]{Definition}
\newtheorem{rem}[thm]{Remark}
\numberwithin{equation}{section}
\newcommand{\norm}[1]{\left\Vert#1\right\Vert}
\newcommand{\abs}[1]{\left\vert#1\right\vert}
\newcommand{\Bc}{\scalebox{1.8}{$\chi$}}% big chi
\def\eb{{\mathbf{e}}}
\def\gf{{\frak F}}
\def\bh{{\mathbb H}}
\def\bn{{\mathbb N}}
\def\bp{{\mathbb P}}
\def\br{{\mathbb R}}
\def\a{\alpha}
\def\l{\lambda} 
\def\m{\mu}
\def\s{\sigma} 
\def\t{\tau}
\def\w{\omega} \def\O{\Omega}
\def\pb{{\mathbf{p}}}
\def\xb{{\mathbf{x}}}
\def\yb{{\mathbf{y}}}
\def\zb{{\mathbf{z}}}
\def\yb{{\mathbf{y}}}
\def\a{\alpha}
\begin{document}
    \title[Uniqueness of fixed points]{Uniqueness of fixed points of $ b$-bistochastic quadratic stochastic operators and
 associated nonhomogenous Markov chains}

   \author{Farrukh Mukhamedov}
\address{Farrukh Mukhamedov\\
 Department of Computational \& Theoretical Sciences\\
Faculty of Science, International Islamic University Malaysia\\
P.O. Box, 141, 25710, Kuantan\\
Pahang, Malaysia} \email{{\tt far75m@yandex.ru}, {\tt
farrukh\_m@iium.edu.my}}

\author{Ahmad Fadillah Embong}
\address{Ahmad Fadillah\\
 Department of Computational \& Theoretical Sciences\\
Faculty of Science, International Islamic University Malaysia\\
P.O. Box, 141, 25710, Kuantan\\
Pahang, Malaysia} \email{{\tt ahmadfadillah.90@gmail.com}}

    \begin{abstract}
In the present paper, we consider a class of quadratic stochastic
operators (q.s.o.) called $ b- $bistochastic q.s.o. We include
several properties of $ b- $bistochastic q.s.o. and their
dynamical behavior. One of the main findings in this paper is the
description on the uniqueness of the fixed points. Besides, we list
the  conditions on strict contractive $ b- $bistochastic q.s.o. on
low dimensional simplices and it turns out that, the uniqueness of
the fixed point does not imply strict contraction. Finally, we
associated Markov measures with \textit{b-}bistochastic q.s.o. On
a class of \textit{b-}bistochastic q.s.o. on finite dimensional simplex, the defined measures
were proven to satisfy the mixing property. Moreover, we show that
Markov measures associated with a class of $ b-
$bistochastic q.s.o  on one dimensional simplex meets the
absolute continuity property.

\vskip 0.3cm \noindent {\it Mathematics Subject
           Classification}: 46L35, 46L55, 46A37.\\
        {\it Key words}: quadratic stochastic operators;  $b-$order;
        $b-$bistochastic, mixing, unique fixed point, absolute continuity.
    \end{abstract}

\maketitle

\section{Introduction}

The history of quadratic stochastic operators (q.s.o.) can be traced
back to Bernstein's work \cite{1} where such kind of operators
appeared from the problems of population genetics (see also
\cite{11}). Such kind of operators describe time evolution of
variety species in biology are represented by so-called
Lotka-Volterra(LV) systems \cite{29}.
 Nowadays, scientists are interested
    in these operators, since they have a lot of applications
    especially  in modelings
    in many different fields such as biology \cite{8,20},
    physics \cite{21,25}, economics and mathematics
    \cite{11,K,20,25}.

Let us recall how q.s.o. appears in biology \cite{11}. The time
evolution of species in biology can be comprehended by
    the following situation. Let $I = \{1,2,\dots,n\}$ be the $n$ type
    of species (or traits) in a population and we denote $x^{(0)} =
    (x_1^{(0)},\dots,x_n^{(0)})$ to be the probability distribution of
    the species in an early state of that population.
    By $P_{ij,k}$ we mean  the
    probability of an individual in the $i^{th}$ species and $j^{th}$
    species to cross-fertilize and produce an individual from $k^{th}$
    species (trait). Given $x^{(0)} =
    (x_1^{(0)},\dots,x_n^{(0)})$, we can find the probability
    distribution of the first generation, $x^{(1)} =
    (x_1^{(1)},\dots,x_n^{(1)})$ by using a total probability, i.e.
    \begin{eqnarray*}
        x_k^{(1)}=\sum\limits_{i,j=1}^n P_{ij,k}x_i^{(0)}x_j^{(0)}, \ \
        k\in\{1,\dots,n\}. \label{operator}
    \end{eqnarray*}

This relation defines an operator which is denoted by $V$ and it
is called \textit{quadratic stochastic operator (q.s.o.)}. In
other words, each q.s.o. describes the sequence of generations
    in terms of probabilities distribution if the values of $P_{ij,k}$
    and the distribution of the current generation are given. In
    \cite{6,MG2015}, it has given along self-contained exposition of the recent
    achievements and open problems in the theory of the q.s.o. The
    main problem in the nonlinear operator theory is to study the
    behavior of nonlinear operators. Presently, there are only a small
    number of studies on dynamical phenomena on higher dimensional
    systems, even though they are very
    important. In case of q.s.o., the difficulty of the problem depends
    on the given cubic matrix $(P_{ijk})^m_{i,j,k=1}$.

In \cite{greed} a
    new majorization was introduced, and it opened a path for the study to
    generalize the theory of majorization by Hardy, Littlewood and
    Polya \cite{inqlty}. The new majorization has an advantage as compared
    to the classical one, since it can be defined as a partial order on
    sequences. While the classical one is not an antisymmetric
    order (because any sequence is majorized by any of its
    permutations), it is only defined as a preorder on
    sequence \cite{greed}.
Most of the works in the mentioned paper were devoted to the
investigation of majorized linear operators (see
\cite{inqlty,greed}). Therefore, it is natural to study nonlinear
majorized operators.

In what follows, to differentiate between the terms majorization
and classical majorization that was popularized by Hardy et
al.\cite{inqlty}, we call majorization as $b-$order (which is
denoted as $\leq^{b}$) while classical majorization as
majorization (which is denoted as $\prec $) only. In
\cite{def_qbo} it was introduced and studied q.s.o. with a
        property $V(\xb) \prec \xb$ for all $\xb
        \in S^{n-1}$. Such an operator is called \textit{bistochastic}.
In \cite{ME2015}, it was proposed to a definition of bistochastic
q.s.o. in terms of $ b$- order. Namely, a q.s.o. is called
\textit{$ b- $bistochastic} if $V(\xb) \leq^{b} \xb$ for all $\xb
S^{n-1}$.

In this paper we continue our previous investigations on
$b$-bistochastic operators. Namely, in Section 2 we recall several
properties of $b$-bistochastic operators which will be used in the
next sections. In Section 3, we describe sufficient conditions for
the uniqueness of fixed points of $b$-bistochastic operators. Then
in Section 4, we establish that $b$-bistochastic q.s.o. with
unique fixed point may not be a strict contraction. In Section 5,
we construct non-homogeneous Markov measures $\m_{V,\xb}$
associated with $b$-bistochastic q.s.o. $V$ and initial state
$\xb\in S^{n-1}.$ We prove that the Markov measure $\m_{V,\xb}$,
associated with $b$-bistochastic q.s.o. having unique fixed point,
satisfied the mixing property. Note that this kind of construction
of a Markov measures associated with q.s.o. was first considered
in \cite{GanistogenQO,ganisarymsakovcenlimthmqc}. Certain
properties of the associated Markov chains have been investigated
in several paper such as
\cite{pulkaMixnErPrNonMC,farL1ErNonDiscMP,farSupAkmaMarPQSP}
According to the construction, the Markov measures  depend on the
initial state of $\xb$. Given q.s.o. it is interesting to know how
the measures $\m_{V,\xb}$ relate to each other for different
initial states. In final Section 6, we examine the absolute
continuity of these measures for $b$-bistochastic q.s.o. We notice
that similar kind of study was done in \cite{ganiMeaCorQO} for
Mendelian q.s.o. We stress that, in that situation, the
non-homogeneous Markov chain reduced to the Bernoulli measures.
But in our situation, the measure $ \m_{\xb,V} $ are purely
non-homogeneous Markov. Therefore, the absolute continuity in this
situation is not evident.

\section{$b$-bistochastic operators} \noindent

In this section we recall necessary definitions and facts about
$b$-bistochastic operators.

Throughout this paper we consider the simplex
    \begin{eqnarray}\label{kthresidue}
    S^{n-1} = \left\{\textbf{x}=(x_1,x_2,...,x_n)\in \mathbb{R}^n\quad
    |x_i\geq0,\quad \sum\limits_{i=1}^{n}x_i = 1\right\}
    \label{eqn1.1}.
    \end{eqnarray}
    Moreover, by $ ri \ S^{n-1} $, we mean the relative interior of $S^{n-1}$, i.e.
    \[ ri \ S^{n-1} = \left\{ x \in S^{n-1} \vert x_{1} \cdot x_{2} \cdots x_{n} \neq 0 \right\}. \]

    Define functionals  $\mathcal{U}_k : \mathbb{R}^{n} \rightarrow
    \mathbb{R}$ by:
    \begin{eqnarray}
    \mathcal{U}_k (x_1,\dots, x_n) = \sum_{i=1}^{k}x_i \ \
    \textrm{where}\ \ k=\overline{1,n-1}. \label{lyapunov_fn}
    \end{eqnarray}

    Let $\xb,\yb\in \ S^{n-1}$. We say that $ \xb $ is
    \textit{b-ordered} or \textit{b-majorized}
    by $ \yb $ ($\xb \leq^{b} \yb$) if and only if $\mathcal{U}_k(\xb)
    \leq \mathcal{U}_k(\yb),\ \ \textrm{for all} \  \ k \in \{1,\dots, n-1\}.$

The introduced relation is partial order i.e. it satisfies the following conditions: %for any $\xb,\yb,\zb \in S^{n-1}$:
    \begin{itemize}
        \item[(i)] For any $ \xb \in S^{n-1} \textmd{ one has } \xb \leq^b \xb, \ \ $
        \item[(ii)] If $ \xb \leq^b \yb \textmd{ and } \yb \leq^b \xb   \textmd{ then } \xb = \yb,$
        \item[(iii)] If $ \xb\leq^b \yb, \textmd{ and } \yb \leq^b \zb  \textmd{ then } \xb \leq^b \zb.$
    \end{itemize}

Using the defined order, one can define the classical majorization
    \cite{Mar}. First, recall that for any $\xb= (x_1, x_2, \dots, x_n)
    \in S^{n-1}$, by $\xb_{[\downarrow]} = (x_{[1]}, x_{[2]},
    \dots, x_{[n]})$ one denotes the \textit{rearrangement} of $\xb$ in non-increasing order,
    i.e. $x_{[1]} \geq x_{[2]} \geq \dots \geq x_{[n]}$. Take $\xb, \yb\in S^{n-1}$,
    then it is said that an element \textit{$\xb$ is majorized by $\yb$} and denoted $\xb \prec \yb$ if
    $\xb_{[\downarrow]}\leq^b\yb_{[\downarrow]}$. We refer the reader to
    \cite{Mar} for more information regarding to this topic.

Any operators $V$ with $V(S^{n-1})\subset S^{n-1}$ is called
    \textit{stochastic}.
\begin{defn}
    A stochastic operator $V$ is called \textit{$b-$bistochastic} if one has $V(\xb)\leq^b\xb$ for all $\xb\in
    S^{n-1}$.
\end{defn}
    Note that, the simplest nonlinear operators are quadratic ones. Therefore, we restrict ourselves to such kind of
    operators. Namely, a stochastic operator $ V:S^{n-1} \rightarrow S^{n-1}$ is called
    \textit{quadratic stochastic operator (q.s.o.)} if $V$ has the
    following form:
    \begin{eqnarray}\label{qso}
    V(\xb)_k= \sum\limits_{i,j=1}^{n}P_{ij,k}x_ix_j,\quad  k =
    1,2,\dots,n, \ \  \xb =(x_1,x_2,\dots,x_n)\in S^{n-1},
    \end{eqnarray}
    where $\{P_{ij,k}\}$ are the heredity coefficients with the following properties:
    \begin{eqnarray}\label{eqn_propty_HC}
    P_{ij,k}\geq0, \quad P_{ij,k}=P_{ji,k},  \quad
    \sum\limits_{k=1}^{n}P_{ij,k} = 1, \quad  i,j,k=1,2,\dots,n \ .
    \end{eqnarray}

    \begin{rem}
If a q.s.o. $V$ satisfies $V(\xb) \prec \xb$ for all $\xb
        \in S^{n-1}$, then it is called \textit{bistochastic} \cite{def_qbo}. In
        our definition, we are taking $ b- $order instead of the majorization.
            \end{rem}

        Let  $F : \mathbb{R}^n \rightarrow \mathbb{R}^n$ be a given
    mapping by
    \[ F(\xb) = (f_{1}(\xb),f_{2}(\xb),...,f_{n}(\xb)), \ \xb\in\br^n \ . \]
    If $F$ is differentiable, then the Jacobian of $F$ at a point
    $\xb$ is defined by
    $$
    J(F(\xb))=
    \begin{bmatrix}
    \frac{\partial f_{1}}{\partial x_{1}}  & \ldots & \frac{\partial f_{1}}{\partial x_{n}} \\
    \vdots & \vdots & \vdots \\
    \frac{\partial f_{n}}{\partial x_{1}} & \ldots & \frac{\partial f_{n}}{\partial x_{n}} \\
    \end{bmatrix}\ .
    $$

    A point $\xb_0$ is called \textit{fixed point} of $F$ if one has
    $F(\xb_0)=\xb_0$.

    \begin{defn}
        A fixed point $\xb_0$ is called \textit{hyperbolic} if the
        absolute value of every eigenvalue $\l$ of the Jacobian at $\xb_{0}$ satisfies  $|\l|\neq 1$. Let $\xb_{0}$ be
        a hyperbolic fixed point, then
        \begin{itemize}
            \item[1.] $\xb_{0}$ is called \textit{attractive} if every
            eigenvalue of $J(F(\xb_0))$ satisfies $|\l|<1$.

            \item[2.] $\xb_{0}$ is called \textit{repelling} if every
            eigenvalue of $J(F(\xb_0))$ satisfies $|\l|>1$
        \end{itemize}
        %If $\xb_{0}$ is not hyperbolic, then it is called
        %\textit{neutral}.
        \label{def_Jacob}
    \end{defn}

    %Let us define
    %\begin{itemize}
    %\item[(i)] $\overline{\xb}_k =
    %(\underbrace{0,0,\dots,0}_{k},\dfrac{1}{n-k}, \dfrac{1}{n-k},
    %\dots , \dfrac{1}{n-k})$, where  $k=\{0,\dots,n-1\},$
    % \item[(ii)] $\eb_m = (\underbrace{0,0,
    %\dots, 0, 1}_m, 0,\dots, 0) \ $, \ \ $m\in\{1,\dots,n\}.$
    %\end{itemize}

  In \cite{ME2015} we have proved the following.

    \begin{thm}\cite{ME2015} \label{thm_des_b-bstc_2D}
        Let $V$ be a $b-$bistochastic q.s.o. defined
        on $S^{n-1}$, then the following statements hold:
        \begin{itemize}
            \item[(i)] $\sum\limits_{m=1}^{k}\sum\limits_{i,j=1}^{n}P_{ij,m}
            \leq kn, \ k \in \{1,\dots,n \}$
            \item[(ii)]$P_{ij,k} =0$ for all $i,j\in\{k+1,\dots,n\}$
            where $k\in\{1,\dots,n-1\}$
            \item[(iii)]$P_{nn,n}=1$
            \item[(iv)] for every $\xb\in S^{n-1}$ one has
            $$
            \begin{array}{ll}
            V(\xb)_k = \sum\limits_{l=1}^{k}P_{ll,k}x^2_l + 2
            \sum\limits_{l=1}^{k}\sum\limits_{j=l+1}^{n}P_{lj,k}x_lx_j  \ \
            where \ \ k = \overline{1,n-1}\\[4mm]
            V(\xb)_n = x_n^2 +  \sum\limits_{l=1}^{n-1}P_{ll,n}x^2_l + 2
            \sum\limits_{l=1}^{n-1}\sum\limits_{j=l+1}^{n}P_{lj,n}x_lx_j.  \
            \end{array}
            $$
            \item[(v)] $P_{lj,l} \leq \dfrac{1}{2}$ for all $j\geq l+1$,
            $l\in\{1,\dots,n-1\}$.
            % \item[(vi)]$P_{ll,l} + 2
            %\sum\limits_{j=l+1}^{n-1}P_{lj,l} +2P_{ln,l}(l-n) \leq 1$, where
            %$l\in\{1,\dots,n-1\}$.
        \end{itemize}
        \label{Prop_Pprties_V}
    \end{thm}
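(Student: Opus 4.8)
The plan is to derive all five items from the single defining property $V(\xb)\leq^b\xb$ by testing it at well-chosen points of $S^{n-1}$. Written out, the property says that for every $\xb\in S^{n-1}$ and every $k\in\{1,\dots,n-1\}$,
\[
\sum_{m=1}^{k}V(\xb)_m=\sum_{m=1}^{k}\sum_{i,j=1}^{n}P_{ij,m}\,x_ix_j=\mathcal{U}_k\big(V(\xb)\big)\le \mathcal{U}_k(\xb)=\sum_{i=1}^{k}x_i,
\]
and when $k=n$ both sides equal $1$, so the inequality (with equality) extends to $k=n$ as well. Item (i) is then immediate: substituting the barycenter $\xb=(1/n,\dots,1/n)$ turns the left-hand side into $\frac1{n^2}\sum_{m=1}^{k}\sum_{i,j}P_{ij,m}$ and the right-hand side into $k/n$, giving $\sum_{m=1}^{k}\sum_{i,j}P_{ij,m}\le kn$ for $k\le n-1$, while for $k=n$ the bound follows at once from $\sum_{m=1}^{n}P_{ij,m}=1$.

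For items (ii) and (iii) I would fix $k\in\{1,\dots,n-1\}$ and $i,j\in\{k+1,\dots,n\}$ and evaluate the displayed inequality at $\xb=\tfrac12(\eb_i+\eb_j)$ (read as the vertex $\eb_i$ when $i=j$). Since this point is supported on coordinates $>k$, its $\mathcal{U}_k$-value is $0$, so the left-hand side, a sum of nonnegative quantities $\sum_{m=1}^{k}(\cdots)$, must vanish term by term; in particular $P_{ij,k}=0$, which is (ii). Specialising to $i=j=n$ and letting $k$ run through $1,\dots,n-1$ gives $P_{nn,k}=0$ for every $k<n$, and then $\sum_{k=1}^{n}P_{nn,k}=1$ forces $P_{nn,n}=1$, which is (iii).

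Item (iv) is a bookkeeping consequence of (ii) and (iii): expand $V(\xb)_k=\sum_{i=1}^{n}P_{ii,k}x_i^2+2\sum_{i<j}P_{ij,k}x_ix_j$, delete every coefficient annihilated by (ii) when $k<n$, and peel off the term $P_{nn,n}x_n^2=x_n^2$ when $k=n$; the surviving summation ranges are exactly those appearing in the stated formulas. For item (v), fix $l\in\{1,\dots,n-1\}$ and $j\ge l+1$, and test the inequality along the segment $\xb=t\eb_l+(1-t)\eb_j$, $t\in[0,1]$. Here $\mathcal{U}_l(\xb)=t$ and
\[
\mathcal{U}_l\big(V(\xb)\big)=t^2\sum_{m=1}^{l}P_{ll,m}+(1-t)^2\sum_{m=1}^{l}P_{jj,m}+2t(1-t)\sum_{m=1}^{l}P_{lj,m},
\]
where the middle sum is $0$ by (ii) (equivalently, by evaluating at $\eb_j$) since $j>l$. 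Dividing the inequality $\mathcal{U}_l(V(\xb))\le t$ by $t>0$ and letting $t\to0^+$ yields $2\sum_{m=1}^{l}P_{lj,m}\le 1$; as $P_{lj,l}$ is one of these nonnegative summands, $P_{lj,l}\le\tfrac12$.

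The whole argument is elementary, so there is no single hard obstacle; the only decisions that matter are the choices of test points (barycenter, vertices/edge-midpoints, and the edge segment), and the one mildly delicate moment is the passage $t\to0^+$ in (v) — reading off the boundary behaviour of the quadratic inequality at the endpoint $t=0$ — where a less careful argument would only produce a weaker constant.
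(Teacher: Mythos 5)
Your argument is correct, and since the paper states this theorem without proof (it is quoted from \cite{ME2015}), the relevant comparison is with that source, where the same strategy is used: write out $\mathcal{U}_k(V(\xb))\le\mathcal{U}_k(\xb)$ and test it at the barycenter, at vertices and edge midpoints (forcing the nonnegative summands to vanish for (ii)--(iii)), and along an edge with a boundary limit for (v). Your passage $t\to 0^+$ correctly captures the delicate point needed to get the constant $\tfrac12$ rather than a weaker bound, so nothing is missing.
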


 \begin{thm}\cite{ME2015}\label{thm_lim_pt_e}
        Let $V$ be a $ b- $bistochastic q.s.o. defined on $S^{n-1}$, then for every
        $\xb\in S^{n-1}$ the limit $\lim\limits_{m \rightarrow
        \infty}V^{(m)}(\xb)=\overline{\xb}$ exists. Moreover,
        $\overline{\xb}$ is a fixed point of $V$.
    \end{thm}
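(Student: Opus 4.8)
The plan is to exploit the fact that the $b$-order is induced by the linear partial-sum functionals $\mathcal{U}_1,\dots,\mathcal{U}_{n-1}$, so that a $b$-decreasing sequence in $S^{n-1}$ gives rise to genuinely monotone bounded sequences of real numbers, which automatically converge.

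First I would fix $\xb\in S^{n-1}$ and write $\xb^{(m)}=V^{(m)}(\xb)$, so that $\xb^{(m+1)}=V(\xb^{(m)})$. Since $V$ is $b$-bistochastic, $\xb^{(m+1)}=V(\xb^{(m)})\leq^b\xb^{(m)}$ for each $m$, and transitivity of $\leq^b$ yields the $b$-decreasing chain $\cdots\leq^b\xb^{(2)}\leq^b\xb^{(1)}\leq^b\xb^{(0)}=\xb$. By the definition of $\leq^b$, this means that for each fixed $k\in\{1,\dots,n-1\}$ the real sequence $a_k^{(m)}:=\mathcal{U}_k(\xb^{(m)})=\sum_{i=1}^{k}x_i^{(m)}$ is non-increasing in $m$; it is bounded below by $0$ since the coordinates of $\xb^{(m)}\in S^{n-1}$ are nonnegative. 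Hence $a_k^{(m)}$ converges to some $a_k^\ast\in[0,1]$, and I also set $a_0^\ast:=0$ and $a_n^\ast:=1$, the latter because $\mathcal{U}_n(\xb^{(m)})=1$ identically.

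Next I would recover the coordinates: since $x_k^{(m)}=\mathcal{U}_k(\xb^{(m)})-\mathcal{U}_{k-1}(\xb^{(m)})$ for each $k\in\{1,\dots,n\}$, we get $x_k^{(m)}\to a_k^\ast-a_{k-1}^\ast=:\overline{x}_k$, so $\xb^{(m)}\to\overline{\xb}:=(\overline{x}_1,\dots,\overline{x}_n)$ coordinatewise, hence in $\br^n$. Passing to the limit in the inequalities $\mathcal{U}_k(\xb^{(m)})\geq\mathcal{U}_{k-1}(\xb^{(m)})$ gives $\overline{x}_k\geq 0$, and $\sum_{k}\overline{x}_k=a_n^\ast-a_0^\ast=1$, so $\overline{\xb}\in S^{n-1}$; this establishes existence of the limit. (Equivalently, $\xb\mapsto(\mathcal{U}_1(\xb),\dots,\mathcal{U}_n(\xb))$ is a linear isomorphism of $\br^n$, so convergence of all the partial sums is the same as convergence of $\xb^{(m)}$.)

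Finally, for the fixed-point assertion, I would use that $V$ is a polynomial map by (\ref{qso}), hence continuous on $S^{n-1}$, so that
\[
V(\overline{\xb})=V\Big(\lim_{m\to\infty}\xb^{(m)}\Big)=\lim_{m\to\infty}V(\xb^{(m)})=\lim_{m\to\infty}\xb^{(m+1)}=\overline{\xb}.
\]
I do not anticipate a real obstacle: the single point deserving care is the observation that $\leq^b$ is governed by the \emph{linear} functionals $\mathcal{U}_k$, which is precisely why $b$-monotonicity of $\{\xb^{(m)}\}$ collapses to ordinary monotonicity of finitely many bounded real sequences and the limit can be read off directly; the corresponding statement for classical majorization $\prec$ would be far less transparent because of the rearrangements involved.
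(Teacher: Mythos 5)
Your proof is correct and follows essentially the same route as the source: the functionals $\mathcal{U}_k$ (introduced in the paper precisely as Lyapunov functions) are monotone non-increasing along trajectories of a $b$-bistochastic operator, so each $\mathcal{U}_k(V^{(m)}(\xb))$ converges, the coordinates are recovered as consecutive differences, and continuity of the polynomial map $V$ identifies the limit as a fixed point. No gaps.
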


 Denote $\bp = (\bp_{1},\dots, \bp_{n}):=(0,\dots,0,1)$.

    \begin{prop}
        Let $V:S^{n-1} \rightarrow S^{n-1}$ be a $b-$bistochastic q.s.o., then $\bp$ is its
        fixed point.
    \end{prop}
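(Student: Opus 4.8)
The plan is to exploit the fact that $\bp$ is the least element of $S^{n-1}$ with respect to the $b$-order. First I would observe that for every $k\in\{1,\dots,n-1\}$ one has $\mathcal{U}_k(\bp)=\bp_1+\dots+\bp_k=0$, whereas $\mathcal{U}_k(\yb)=y_1+\dots+y_k\geq 0$ for any $\yb=(y_1,\dots,y_n)\in S^{n-1}$ since all coordinates are nonnegative. Hence $\bp\leq^b\yb$ for every $\yb\in S^{n-1}$; in particular $\bp\leq^b V(\bp)$.

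On the other hand, since $V$ is $b$-bistochastic, the defining inequality applied at the point $\xb=\bp$ gives $V(\bp)\leq^b\bp$. Combining $\bp\leq^b V(\bp)$ with $V(\bp)\leq^b\bp$ and invoking the antisymmetry of the $b$-order (property (ii) of the partial order listed in Section~2) yields $V(\bp)=\bp$, which is the assertion.

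Alternatively, one can argue by direct computation: evaluating the quadratic form \eqref{qso} at $\bp$ annihilates every term except $i=j=n$, so $V(\bp)_k=P_{nn,k}$ for each $k$. By Theorem~\ref{thm_des_b-bstc_2D}(iii) we have $P_{nn,n}=1$, and then the normalization $\sum_{k=1}^{n}P_{nn,k}=1$ together with $P_{nn,k}\geq 0$ forces $P_{nn,k}=0$ for all $k\neq n$ (equivalently, this vanishing follows at once from Theorem~\ref{thm_des_b-bstc_2D}(ii) with $i=j=n$, valid for each $k\in\{1,\dots,n-1\}$). Thus $V(\bp)=(0,\dots,0,1)=\bp$. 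There is essentially no obstacle here; the only point requiring care is to cite the antisymmetry of $\leq^b$ (or, in the second route, the normalization of the heredity coefficients) rather than treat $\bp$ being fixed as self-evident.
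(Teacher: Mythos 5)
Your proof is correct; in fact you give two independent correct arguments, and the paper itself states this proposition without proof, so either would serve. Your second route (evaluate \eqref{qso} at $\bp$ to get $V(\bp)_k=P_{nn,k}$, then invoke Theorem~\ref{thm_des_b-bstc_2D}(ii) with $i=j=n$ and (iii)) is exactly what the machinery assembled immediately before the proposition is set up to deliver, and is presumably the intended argument. Your first route is a genuinely different and arguably cleaner observation: $\bp$ is the least element of $(S^{n-1},\leq^b)$ because $\mathcal{U}_k(\bp)=0\leq\mathcal{U}_k(\yb)$ for all $k\in\{1,\dots,n-1\}$, so $\bp\leq^b V(\bp)\leq^b\bp$ and antisymmetry forces $V(\bp)=\bp$. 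That argument needs nothing about the heredity coefficients and shows the least element of the order is automatically fixed by \emph{any} $b$-bistochastic map, quadratic or not, which is a small gain in generality; the computational route, by contrast, identifies exactly which coefficients ($P_{nn,k}$) are responsible. Both are complete; no gaps.
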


    From the last proposition, it is tempted
    for us to investigate the behavior of the fixed point
    $\bp$. Using the substitution
    $$\xb = (x_1,\dots,x_n) \
    \ \Longrightarrow  \ \xb = (x_1,\dots,1-(x_1+\dots+x_{n-1})).$$ We
    restrict ourselves to the consideration of the first $n-1$ coordinates of $V$.
    In this case, the found fixed point above is reduced to
    $\xb=(0,0,\dots,0)$. Moreover, using property (iv) in Theorem
    \ref{Prop_Pprties_V} and replacing $x_n = 1-(x_1+\dots+x_{n-1})$
    one can find the Jacobian at the fixed point $\bp$ namely,

    $$J\left[V(0,0,\dots,0)\right] =
    \begin{bmatrix}
    2P_{1n,1} & 0 & \dots & 0 \\
    2P_{1n,2} & 2P_{2n,2} & \dots & 0\\
    \vdots & \vdots & \ddots & \vdots \\
    2P_{1n,n-1}& 2P_{2n,n-1} & \dots & 2P_{n-1n,n-1}
    \end{bmatrix}
    .$$ Thus, the eigenvalues of $J(V(0,0,\dots,0))$ are
    $\{2P_{kn,k}\}_{k=1}^{n-1}$.

    \begin{thm}
        The fixed point $(0,0,\dots,0)$ is not repelling.
    \end{thm}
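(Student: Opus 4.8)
The plan is to exploit the explicit description of the Jacobian spectrum at $\bp$ that is already derived just above the statement, together with property (v) of Theorem \ref{Prop_Pprties_V}. After the reduction to the first $n-1$ coordinates, the matrix $J[V(0,0,\dots,0)]$ is lower triangular, so its eigenvalues are exactly its diagonal entries, namely $\{2P_{kn,k}\}_{k=1}^{n-1}$. Consequently, by Definition \ref{def_Jacob}, the fixed point could be repelling only if $2P_{kn,k}>1$ for \emph{every} $k\in\{1,\dots,n-1\}$, so it suffices to exhibit at least one index $k$ for which this fails.

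First I would invoke item (v) of Theorem \ref{Prop_Pprties_V}, which asserts $P_{lj,l}\le \tfrac12$ for all $j\ge l+1$ and all $l\in\{1,\dots,n-1\}$. Choosing $j=n$ — an admissible choice precisely because $n\ge l+1$ whenever $l\le n-1$ — yields $P_{kn,k}\le\tfrac12$, hence $0\le 2P_{kn,k}\le 1$, for \emph{every} $k\in\{1,\dots,n-1\}$. Thus not merely one, but all eigenvalues of $J[V(0,0,\dots,0)]$ have modulus at most $1$.

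Therefore none of the eigenvalues exceeds $1$ in absolute value, and by Definition \ref{def_Jacob} the fixed point $(0,0,\dots,0)$ cannot be repelling; in fact, if it happens to be hyperbolic it is necessarily attractive. There is no substantive obstacle here: the statement is an immediate corollary of the triangular form of the Jacobian and part (v) of Theorem \ref{Prop_Pprties_V}. The only points deserving a line of care are verifying that $j=n$ is a legitimate index in (v) and noting that, because the reduced Jacobian is triangular, its diagonal entries genuinely exhaust its spectrum.
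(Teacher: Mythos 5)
Your argument is correct and is exactly the one the paper intends: the reduced Jacobian at $\bp$ is lower triangular with eigenvalues $\{2P_{kn,k}\}_{k=1}^{n-1}$, and Theorem \ref{Prop_Pprties_V}(v) with $j=n$ gives $2P_{kn,k}\le 1$ for every $k$, so no eigenvalue has modulus exceeding $1$ and the fixed point cannot be repelling. Your closing remark that hyperbolicity would then force attractivity matches the paper's subsequent Corollary \ref{ccc1}.
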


    From the discovered eigenvalues and Theorem \ref{thm_des_b-bstc_2D}, we conclude the next corollary.

    \begin{cor}\label{ccc1}
        If  $P_{lj,l} < \dfrac{1}{2}$ for all $l\in\{1,\dots,n-1\}$,
        $j\geq l+1,$ then the fixed point $\xb = (0,0,\dots,0)$ is
        attracting. \label{cor_fix_pt_att}
    \end{cor}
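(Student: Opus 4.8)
The plan is to obtain the conclusion directly from the eigenvalue computation carried out just before the statement, combined with part (v) of Theorem~\ref{Prop_Pprties_V}. Recall that, after the substitution $x_n = 1-(x_1+\cdots+x_{n-1})$, the fixed point $\bp=(0,\dots,0,1)$ becomes $(0,\dots,0)$, and the Jacobian of the reduced map at this point is the lower triangular matrix displayed above, whose spectrum is therefore exactly $\{2P_{kn,k}\}_{k=1}^{n-1}$.

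First I would record that each $P_{kn,k}\geq 0$ by the sign condition in \eqref{eqn_propty_HC}, so every eigenvalue satisfies $2P_{kn,k}\geq 0$. Next I would apply the hypothesis $P_{lj,l}<\tfrac12$ with the admissible choice $j=n$ — admissible because $n\geq l+1$ for every $l\in\{1,\dots,n-1\}$ — to get $P_{kn,k}<\tfrac12$, hence $2P_{kn,k}<1$, for each $k$. Combining the two bounds, every eigenvalue $\lambda$ of $J(V(0,\dots,0))$ lies in $[0,1)$; in particular $|\lambda|\neq 1$, so $(0,\dots,0)$ is a hyperbolic fixed point, and since moreover $|\lambda|<1$ for all of them, it is attracting in the sense of Definition~\ref{def_Jacob}(1). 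That is the whole argument.

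I do not expect any real obstacle: the substantive work was already done in proving Theorem~\ref{Prop_Pprties_V}(v) and in computing the triangular Jacobian, and the corollary merely upgrades the weak inequality $P_{lj,l}\leq\tfrac12$ (which gave ``not repelling'') to the strict one, which moves every eigenvalue strictly inside the unit disk. The only point worth a line of care is that ``attracting'' here is the purely local, Jacobian-based notion of Definition~\ref{def_Jacob}, so no argument about basins of attraction is needed; if one wished to add the global statement that all orbits near $\bp$ (indeed all orbits, by Theorem~\ref{thm_lim_pt_e}) converge to it, one could append a remark invoking Theorem~\ref{thm_lim_pt_e}, but that is beyond what the corollary asserts.
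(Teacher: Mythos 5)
Your argument is correct and is exactly the route the paper intends: the corollary is stated as an immediate consequence of the triangular Jacobian computation, whose eigenvalues $2P_{kn,k}$ all lie in $[0,1)$ under the strict hypothesis (taking $j=n$), so the fixed point is attracting in the sense of Definition~\ref{def_Jacob}. No gap; your write-up merely makes explicit what the paper leaves implicit.
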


    The conditions of the last corollary does not imply the uniqueness of fixed point $ \bp $ as shown in the following example.
    \begin{exm}\label{exm_attrct_no_uni} Let us denote
        \begin{eqnarray}\label{notation_on_2D}
        P_{11,1} = A_1 \hspace{50pt} P_{13,1} = C_1 \hspace{50pt} P_{23,1} = E_1  \nonumber \\
        P_{11,2} = A_2 \hspace{50pt} P_{13,2} = C_2 \hspace{50pt}   P_{23,2} = E_2 \\
        P_{12,1} = B_1 \hspace{50pt} P_{22,1} = D_1 \hspace{50pt}P_{33,1} = F_1 \nonumber \\
        P_{12,2} = B_2  \hspace{50pt}P_{22,2} = D_2  \hspace{50pt} P_{33,2} = F_2 \nonumber
        \end{eqnarray}
        Take the heredity coefficients as follows: $ A_{1} = D_{2}=1, D_{1} = E_{1} = F_{1} =F_{2} =0  $ and $ B_{1}, C_{1}, E_{2} <\frac{1}{2} $
        such that $ B_{1} = C_{1} $ and $ C_{1} + C_{2} \leq \frac{1}{2}. $
        From \eqref{eqn_propty_HC}, one finds $ A_{2} = 0 $.
        Due to \cite[Theorem 5.3]{ME2015} a q.s.o generated by the given heredity coefficients is $b$- bistochastic.
        It is clear that, to obtain all fixed points, one must solve the following system
        \begin{eqnarray}\label{eqn_V_1=x_1_2D}
        \left\{
        \begin{array}{ll}
        {x_{{1}}}\left( \left( 1-2\,C_{1}\right) {x_{{1}}}+   2C_{1}-1 \right) = 0,
        \\[2mm]
        {x_{{1}}} \left( -2C_{2}  {x_{{1}}}+   \left( 2\,B_{2}-2\,C_{2}-2\,E_{2} \right) x_{{2}}+2\,C_{2} \right) +
        x_{{2}}\left(\left( 1-2\,E_{2} \right) {x_{{2}}}+  2\,E_{2}- 1 \right) = 0.
        \end{array}
        \right.  %\nonumber
        \end{eqnarray}

         From \eqref{eqn_V_1=x_1_2D}, to find solutions other than $ x_{1}=0 $, we have to solve
        $ \left( 1-2C_{1} \right) \left(  x_{1}-1\right) =0. $
        Thus, a possible solution is $ x_{1} =1 $. Hence, we have another fixed point $(1,0,0)$. Now assume $x_{1} =0 $, then
        from the second equation of \eqref{eqn_V_1=x_1_2D} one finds
        $ x_{{2}}\left(\left( 1-2\,E_{2} \right) {x_{{2}}}+  2\,E_{2}- 1 \right) =0
        $ which has the solutions $ x_{2} = 0$  or $ x_{2} =1$.
        Therefore, we have other fixed points $ (0,0,1) \textmd{ and } (0,1,0) $. Consequently, we infer that  $ (0,0,1) $ is not
        unique, while conditions of Corollary \ref{ccc1} are
        satisfied.
    \end{exm}

\section{Uniqueness of the Fixed Point }

 From the previous section, it is natural to ask: What are conditions for the uniqueness of the fixed point $ \bp
 $?. In this section, we are going to provide an answer to the
 raised question.

 In the sequel, we need an auxiliary fact.

    \begin{lem}
        The inequality
        \begin{eqnarray}
        A_1x_1+\dots+A_nx_n+C\leq (resp, <) 0
        \label{lem_inq_lin}
        \end{eqnarray}
        holds under conditions                                                                                                                                                                                                                                                                                                                                                                                                                                                                                                                                                                                                                             $0\leq x_1+\dots+x_n\leq1$, $x_k\geq 0$,
        $k\in\{1,\dots,n\}$ if and only if
        \begin{itemize}
            \item[(i)]$C\leq (resp, <)0 \ \ and$
            \item[(ii)]$A_k+C\leq (resp, <) 0, \ \ k=\overline{1,n} \ . $
        \end{itemize}
        \label{lem_lin_inql_leq0}
    \end{lem}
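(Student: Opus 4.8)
The plan is to reduce the constrained linear inequality to a statement about convex combinations by adjoining a slack coordinate. First I would set $x_0 := 1-(x_1+\dots+x_n)$, so that the hypotheses $0\leq x_1+\dots+x_n\leq 1$ and $x_k\geq 0$ ($k=\overline{1,n}$) become precisely $x_0,x_1,\dots,x_n\geq 0$ together with $\sum_{k=0}^{n}x_k=1$. Writing $C=C\sum_{k=0}^{n}x_k$, the left-hand side of \eqref{lem_inq_lin} becomes
\[ A_1x_1+\dots+A_nx_n+C \;=\; Cx_0+(A_1+C)x_1+\dots+(A_n+C)x_n, \]
which is a convex combination of the $n+1$ numbers $C,\,A_1+C,\,\dots,\,A_n+C$ with weights $x_0,x_1,\dots,x_n$.

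For the \emph{if} direction, assume (i) and (ii). Then each of the numbers $C, A_1+C,\dots,A_n+C$ is $\leq 0$ (resp. $<0$), and hence so is any convex combination of them; in the strict case one uses in addition that the weights $x_0,\dots,x_n$ are nonnegative and sum to $1$, so at least one of them is strictly positive, forcing the combination to be genuinely negative. For the \emph{only if} direction, I would just evaluate the inequality at the vertices of the admissible region: the choice $x_1=\dots=x_n=0$ (which is admissible) gives $C\leq 0$ (resp. $<0$), i.e. (i), and the choice $x_k=1$ with all other coordinates equal to $0$ (also admissible) gives $A_k+C\leq 0$ (resp. $<0$), i.e. (ii). Thus (i) and (ii) are necessary.

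I do not expect any real obstacle here; the argument is essentially the observation that an affine functional on the simplex $\{x\geq 0,\ \sum x_k\leq 1\}$ attains its maximum at a vertex. The only point that warrants a moment's care is the strict version of the sufficiency claim, where one must not merely say ``a convex combination of negative numbers is $\leq 0$'' but use the normalization $\sum_{k=0}^{n}x_k=1$ to guarantee that the combination is strictly negative.
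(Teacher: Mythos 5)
Your argument is correct and complete; the paper itself omits the proof entirely (stating only that it is obvious), and your slack-variable reformulation as a convex combination, with vertex evaluation for necessity, is exactly the standard argument the authors have in mind. Your care with the strict case (using that the weights sum to $1$) is the right touch.
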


    The proof is obvious.

    Here is the main result of this section.

    \begin{thm}\label{thm_uni_fp}
        Let $ V:S^{n-1}\rightarrow S^{n-1} $ be a $ b- $bistochastic q.s.o. If
  \begin{equation}\label{UN11}
  P_{kk,k} < 1 \textmd{ and } P_{kj,k}<\dfrac{1}{2},
  \end{equation}
        for any $ k \in \{1,\dots, n-1\} $ and $ j=\{k+1,\dots,n\}. $ Then, the fixed point $ \bp $ is unique.
    \end{thm}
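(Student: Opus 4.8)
The plan is to show that the only fixed point of $V$ in $S^{n-1}$ is $\bp=(0,\dots,0,1)$, working with the reduced coordinates $(x_1,\dots,x_{n-1})$ as in Section 2, so that $\bp$ becomes the origin and a fixed point of $V$ corresponds to a solution of $V(\xb)_k=x_k$ for $k=\overline{1,n-1}$ with $x_k\geq 0$ and $x_1+\dots+x_{n-1}\leq 1$. I would argue by induction on the coordinate index $k$, proving that any fixed point $\xb$ satisfies $x_1=x_2=\dots=x_{n-1}=0$.

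For the base of the induction, consider the first coordinate. Using property (iv) of Theorem \ref{Prop_Pprties_V} together with the substitution $x_n=1-(x_1+\dots+x_{n-1})$, the equation $V(\xb)_1=x_1$ can be written as $x_1\cdot Q_1(\xb)=0$, where $Q_1(\xb)$ is an affine expression in $x_1,\dots,x_{n-1}$ whose constant term is $2P_{1n,1}-1$ and whose coefficient of $x_j$ has the form $2P_{1j,1}-2P_{1n,1}$ (or a similar combination coming from expanding the quadratic terms and collecting them after the substitution). By hypothesis \eqref{UN11}, $P_{1n,1}<\tfrac12$, so the constant term $2P_{1n,1}-1<0$; one then checks, using $P_{1j,1}\le\tfrac12$ (hypothesis) and $P_{1j,1}=0$ if needed from Theorem \ref{Prop_Pprties_V}(ii), that each coefficient-plus-constant is strictly negative. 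By Lemma \ref{lem_lin_inql_leq0} (the strict version), $Q_1(\xb)<0$ on the domain $0\le x_1+\dots+x_{n-1}\le 1$, $x_j\ge 0$; hence $x_1\cdot Q_1(\xb)=0$ forces $x_1=0$.

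For the inductive step, suppose $x_1=\dots=x_{k-1}=0$ has been established. Then the equation $V(\xb)_k=x_k$ simplifies drastically: all cross-terms involving $x_1,\dots,x_{k-1}$ vanish, leaving $P_{kk,k}x_k^2+2\sum_{j=k+1}^{n}P_{kj,k}x_kx_j=x_k$, i.e.\ $x_k\bigl(P_{kk,k}x_k+2\sum_{j=k+1}^n P_{kj,k}x_j-1\bigr)=0$. After substituting $x_n=1-(x_k+\dots+x_{n-1})$ this is again $x_k\cdot Q_k(\xb)=0$ with $Q_k$ affine in $x_k,\dots,x_{n-1}$; its constant term is $2P_{kn,k}-1<0$ by \eqref{UN11}, and one checks via $P_{kk,k}<1$ and $P_{kj,k}<\tfrac12$ that each coefficient-plus-constant is strictly negative, so Lemma \ref{lem_lin_inql_leq0} gives $Q_k(\xb)<0$ and therefore $x_k=0$. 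Running the induction through $k=n-1$ yields $x_1=\dots=x_{n-1}=0$, i.e.\ the unique fixed point is $\bp$.

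The main obstacle is the bookkeeping in rewriting $V(\xb)_k=x_k$ as $x_k\cdot Q_k(\xb)=0$ with $Q_k$ affine: one must carefully expand the quadratic form of Theorem \ref{Prop_Pprties_V}(iv), perform the substitution $x_n=1-\sum_{i<n}x_i$, and collect coefficients so that the resulting linear inequality matches the hypotheses of Lemma \ref{lem_lin_inql_leq0} exactly — in particular verifying that strict inequalities \eqref{UN11}, rather than the weak ones in Theorem \ref{Prop_Pprties_V}(v), are precisely what is needed for the strict conclusion of the lemma, and hence for $Q_k<0$ everywhere on the simplex. Once that reduction is in place each step is immediate.
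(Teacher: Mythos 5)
Your proposal is correct and follows essentially the same route as the paper: reduce to the system $V(\xb)_k=x_k$, $k=\overline{1,n-1}$, factor each equation so that (after using $x_1=\dots=x_{k-1}=0$ from the previous steps) it reads $x_k\cdot Q_k(\xb)=0$ with $Q_k$ affine, and apply the strict version of Lemma \ref{lem_lin_inql_leq0} with the constant term $2P_{kn,k}-1<0$ and coefficient-plus-constant checks $P_{kk,k}-1<0$, $2P_{kj,k}-1<0$. The only nitpick is the stray ``$P_{1j,1}\le\tfrac12$'' in your base case, which should be the strict inequality from \eqref{UN11}, as you yourself note is what the strict conclusion of the lemma requires.
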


    \begin{proof}
        Let $\xb =(x_1,\dots,x_n) \in S^{2} $. Taking into account $ V(\xb)_{n} = 1- \left(V(\xb)_{1}+\dots+ V(\xb)_{n-1}\right) $, then all fixed points can be obtained by solving the system
        \[ \left\{
        \begin{array}{ll}
        V(\xb)_1 = x_1, \\
        \ \ \vdots\\
        V(\xb)_{n-1} = x_{n-1},
        \end{array} \right. \]
        which due to Theorem \ref{Prop_Pprties_V}, the last system is equivalent to
        \[ \left\{ \begin{array}{ll}
        0 &= \left( P_{11,1} - 2P_{1n,1} \right)x^2_1  + 2 \sum\limits_{j=2}^{n-1}\left(P_{1j,1} - P_{1n,1}\right)x_1x_j + 2 P_{1n,1}x_{1} - x_{1}, \\
        &\vdots\\
        0 &= \sum\limits_{l=1}^{k}\left( P_{ll,k} - 2P_{ln,k} \right)x_{l}^{2} + 2 \sum\limits_{l=1}^{k} \sum\limits_{j=l+1}^{n-1} \left( P_{lj,k} - P_{ln,k} \right)x_{j}x_{l} + 2 \sum\limits_{j=1}^{k}P_{jn,k} x_{j} - 2 \sum\limits_{l=1}^{k-1} \sum\limits_{j=l+1}^{k} P_{jn,k} x_{j}x_{l} - x_{k} \\
        &\vdots\\
        0 &= \sum\limits_{l=1}^{n-1}\left( P_{ll,n-1} - 2P_{ln,n-1} \right)x_{l}^{2} + 2 \sum\limits_{l=1}^{n-2} \sum\limits_{j=l+1}^{n-1} \left( P_{lj,n-1} - P_{ln,n-1} \right)x_{j}x_{l} + 2 \sum\limits_{j=1}^{n-1}P_{jn,n-1} x_{j}  \\
        & \ \ \ - 2 \sum\limits_{l=1}^{n-2} \sum\limits_{j=l+1}^{n-1} P_{jn,n-1} x_{j}x_{l} - x_{n-1}.
        \end{array} \right. \]

        Then for each $ k= 1,\dots, n-2$ the equation $ V(\xb)_{k} = x_{k} $ reduces to
        \begin{eqnarray}\label{eqn_fac_V(xb)_k=x_{k}}
        0 &=& x_{1}\left(
        \left( P_{11,k} - 2P_{1n,k} \right)x_{1} +
        2 \sum\limits_{j=2}^{n-1} \left( P_{1j,k} - P_{1n,k} \right)x_{j} +
        2P_{1n,k} - 2 \sum\limits_{j=2}^{k}P_{jn,k}x_{j} \nonumber
        \right)  \\
        &&  + x_{2}\left(
        \left( P_{22,k} - 2P_{2n,k} \right)x_{2} +
        2 \sum\limits_{j=3}^{n-1} \left( P_{2j,k} - P_{2n,k} \right)x_{j} +
        2P_{2n,k} - 2 \sum\limits_{j=3}^{k}P_{jn,k}x_{j}
        \right) + \dots \\ \nonumber
        && + x_{k}\left(
        \left( P_{kk,k} - 2P_{kn,k} \right)x_{k} +
        2 \sum\limits_{j=k+1}^{n-1} \left( P_{kj,k} - P_{kn,k} \right)x_{j} +
        2P_{kn,k} - 1
        \right). \nonumber
        \end{eqnarray}
      In the case of $ k=n-1 $, one has
      \begin{eqnarray}\label{eqn_fac_V(xb)_n-1=x_{n-1}}
      0 &=& x_{1}\left(
      \left( P_{11,k} - 2P_{1n,k} \right)x_{1} +
      2 \sum\limits_{j=2}^{n-1} \left( P_{1j,k} - P_{1n,k} \right)x_{j} +
      2P_{1n,k} - 2 \sum\limits_{j=2}^{k}P_{jn,k}x_{j} \nonumber
      \right)  \\
      &&  + x_{2}\left(
      \left( P_{22,k} - 2P_{2n,k} \right)x_{2} +
      2 \sum\limits_{j=3}^{n-1} \left( P_{2j,k} - P_{2n,k} \right)x_{j} +
      2P_{2n,k} - 2 \sum\limits_{j=3}^{k}P_{jn,k}x_{j}
      \right) + \dots \\ \nonumber
      && + x_{n-1}\left(
      \left( P_{n-1n-1,n-1} - 2P_{n-1n,n-1} \right)x_{n-1} +
       2P_{n-1n,n-1} - 1
      \right). \nonumber
      \end{eqnarray}
%      Due to the assumption, we find $ 2P_{kn,k} - 1 \neq 1 $ which means $ P_{kn,k} - 1 <\frac{1}{2} $ for any $ k \in \{1,\dots, n-1\}. $

%        Conversely,
        Let  $ P_{kk,k} < 1 \textmd{ and } P_{kj,k}<\frac{1}{2}$
        for any $ k \in \{1,\dots, n-1\} $, $ j=\{k+1,\dots,n\}. $
        Therefore, one finds that $ P_{11,1}-1<0 $ and $ 2P_{1j,1}-1<0 $, for any $ j\in\{2,\dots,n\} $. Hence, by means of Lemma \ref{lem_lin_inql_leq0} one gets
        \[
        \left( P_{11,1} - 2P_{1n,1} \right)x_1 + 2 \sum\limits_{j=2}^{n-1}\left(P_{1j,1} - P_{1n,1}\right)x_j + 2 P_{1n,1} - 1
        <0. \]
        This implies that, the solution $ x_{1} =0 $ is unique for $ V(\xb)_{1} = x_{1} $.
        %$ 2P_{1j,1} - 2P_{1n,1} + 2P_{1n,1 - 1} <0 $
        Using induction, let us assume $ x_{1} =0,  \dots, x_{k-1} =0 $ are unique solutions for $ V(\xb)_{1} = x_{1}, \dots, V(\xb)_{k-1} = x_{k-1} $,
        respectively. Now, we want to prove $ x_{k} =0 $ is also a unique solution for $ V(\xb)_{k} = x_{k} $.
        Using the assumption,
        %(i.e $ x_{1} =0,  \dots, x_{k-1} =0 $ are the unique solutions),
        it is clear that \eqref{eqn_fac_V(xb)_k=x_{k}} reduces to
        \[ x_{k}\left(
        \left( P_{kk,k} - 2P_{kn,k} \right)x_{k} +
        2 \sum\limits_{j=k+1}^{n-1} \left( P_{kj,k} - P_{kn,k} \right)x_{j} +
        2P_{kn,k} - 1
        \right) = 0 \]
        By the same argument as before, we know that
        \[
        \left( P_{kk,k} - 2P_{kn,k} \right)x_{k} +
        2 \sum\limits_{j=k+1}^{n-1} \left( P_{kj,k} - P_{kn,k} \right)x_{j} +
        2P_{kn,k} - 1 <0
        \]
        which gives the uniqueness of $ x_{k}=0 $.
        One may proceed the same procedure to show $ x_{n-1}=0 $ is unique.
        This completes the proof.
    \end{proof}
By $\mathcal{V}_u$ we denote the set of all $b$-bistochastic
q.s.o. whose heredity coefficients satisfy \eqref{UN11}. From
Theorem \ref{thm_uni_fp} we immediately find the following fact.

\begin{cor}
The set $\mathcal{V}_u$ is convex.
\end{cor}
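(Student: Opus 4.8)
The plan is to realise $\mathcal{V}_u$ as an intersection of convex subsets of the Euclidean space of heredity arrays $(P_{ij,k})_{i,j,k=1}^{n}$, identifying each q.s.o. with its array through \eqref{qso}. The crucial observation is that a convex combination of operators corresponds, entry by entry, to the convex combination of the associated arrays: if $V$ and $W$ are q.s.o. with coefficients $(P^{V}_{ij,k})$ and $(P^{W}_{ij,k})$ and $\lambda\in[0,1]$, then for every $\xb\in S^{n-1}$
\[
\bigl(\lambda V+(1-\lambda)W\bigr)(\xb)_k=\sum_{i,j=1}^{n}\bigl(\lambda P^{V}_{ij,k}+(1-\lambda)P^{W}_{ij,k}\bigr)x_ix_j,
\]
so $\lambda V+(1-\lambda)W$ is again of the form \eqref{qso} with heredity array $\lambda P^{V}+(1-\lambda)P^{W}$. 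Since each of the requirements in \eqref{eqn_propty_HC} --- non-negativity, the symmetry $P_{ij,k}=P_{ji,k}$, and the normalisation $\sum_{k}P_{ij,k}=1$ --- is affine or convex in the entries, it is preserved under convex combinations; in particular the set of all q.s.o. on $S^{n-1}$ is convex.

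Next I would show that the $b$-bistochastic property is convex. Fix $\xb\in S^{n-1}$ and $k\in\{1,\dots,n-1\}$. As the functional $\mathcal{U}_k$ in \eqref{lyapunov_fn} is linear and $\bigl(\lambda V+(1-\lambda)W\bigr)(\xb)=\lambda V(\xb)+(1-\lambda)W(\xb)$, we obtain
\[
\mathcal{U}_k\!\left(\bigl(\lambda V+(1-\lambda)W\bigr)(\xb)\right)=\lambda\,\mathcal{U}_k\bigl(V(\xb)\bigr)+(1-\lambda)\,\mathcal{U}_k\bigl(W(\xb)\bigr)\le\mathcal{U}_k(\xb),
\]
where the inequality uses $V(\xb)\leq^{b}\xb$ and $W(\xb)\leq^{b}\xb$. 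Hence $\bigl(\lambda V+(1-\lambda)W\bigr)(\xb)\leq^{b}\xb$ for all $\xb$, i.e. $\lambda V+(1-\lambda)W$ is again $b$-bistochastic, so the $b$-bistochastic q.s.o. form a convex set.

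Finally, the defining conditions \eqref{UN11} are strict linear inequalities in the entries of the array, namely $P_{kk,k}<1$ and $P_{kj,k}<\frac{1}{2}$ for $k\in\{1,\dots,n-1\}$ and $j\in\{k+1,\dots,n\}$; the corresponding solution set is an intersection of open half-spaces, hence convex (if $a,b<c$ then $\lambda a+(1-\lambda)b<c$ for all $\lambda\in[0,1]$). Therefore $\mathcal{V}_u$ is the intersection of three convex sets --- the q.s.o., the $b$-bistochastic ones, and the solution set of \eqref{UN11} --- and is thus convex. The whole argument is routine; the only step meriting a moment's care is the first one, namely checking that the convex combination of two operators is precisely the q.s.o. with the convex-combined heredity array and that \eqref{eqn_propty_HC} survives this operation, after which convexity of the remaining constraints is immediate from the linearity of $\mathcal{U}_k$ and from \eqref{UN11} being strict affine inequalities.
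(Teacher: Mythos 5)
Your proof is correct and follows the same route the paper intends (the paper treats the corollary as immediate from the linearity of the constraints in \eqref{UN11} together with Theorem \ref{thm_uni_fp}): identify operators with their heredity arrays, note that a convex combination of q.s.o.\ is the q.s.o.\ with the convex-combined array, and observe that stochasticity, $b$-bistochasticity (via linearity of $\mathcal{U}_k$), and the strict inequalities \eqref{UN11} are all preserved under convex combination. Nothing further is needed.
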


This means that for any two operators $V_1,V_2$ taken from $
\mathcal{V}_u$, their convex combination $\l V_1+(1-\l)V_2$ also
has a unique fixed point $\mathbb{P}$, and moreover, all its
trajectory converges to $\mathbb{P}$. Note that, in general, this
fact is not true, for arbitrary q.s.o.

\begin{rem}
    The uniqueness conditions of fixed point $ \bp $ in Theorem \ref{thm_uni_fp} is only sufficient. Indeed,
    let us consider the following example in which we will keep the notations from Example \ref{exm_attrct_no_uni}.

    Let $ V $ be a q.s.o. generated by the following heredity coefficients
    $ A_{1},D_{2}<1, D_{1} = E_{1} = F_{1} =F_{2} =0  $, $ B_{1}, E_{2} \leq\frac{1}{2} $, $ C_{1}=\frac{1}{2} $ and $ C_{2} =0. $
%   From \eqref{eqn_propty_HC}, one finds $ A_{2} = 0 $.
    Due to \cite[Theorem 5.3]{ME2015}, $ V $ is $b$- bistochastic.
    It is clear that, to obtain all fixed points, we should  solve the following system
    \begin{eqnarray}\label{eqn_V_1=x_1_2D_not_uni}
    \left\{
    \begin{array}{ll}
    {x_{{1}}}\left( \left( A_{1}-1 \right)x_{1} + \left( 2B_{1}-1\right) {x_{{2}}} \right) = 0, \nonumber
    \\[2mm]
    {x_{{1}}} \left(  A_{{2}} {x_{{1}}}+   \left( 2\,B_{{2}}-2\,E_{{2}} \right) x_{{2}} \right) +
    x_{{2}}\left(\left( D_{{2}}-2\,E_{{2}} \right) {x_{{2}}}+  2\,E_{{2}}- 1 \right) = 0. \label{eqn_V_2=x_2_2D}
    \end{array}
    \right.  %\nonumber
    \end{eqnarray}
    From the last system, it is evident that we need to solve $ \left( A_{1}-1 \right)x_{1} + \left( 2B_{1}-1\right) {x_{{2}}} =0 $ in order to find
    solutions other than $ x_{1}=0 $. So, assuming $ x_{1}>0 $, we get
    \[ x_{1} = \dfrac{x_2(1-2B_{1})}{A_{1}-1} \]
    which it turns out that $ x_{1} $ does not belong to the simplex $ S^{2} $ (i.e $ x_{1}<0 $ since $ A_{1}<1 $).
    Whence, the only possible solution is $ x_{1}=0 $.
    Using this fact, one gets $ x_{{2}}\left(\left( D_{{2}}-2\,E_{{2}} \right) {x_{{2}}}+  2\,E_{{2}}- 1 \right) = 0 $, which yields
    \[ x_{2} = \dfrac{1-2E_{2}}{D_{2} -2 E_{2}} \]
    Due to $ D_{2}<1 $, we infer that $ x_{2}\leq 0 $, this means that $ x_{2}=0 $ is only solution.
    Hence, the fixed point $ (0,0,1) $ is unique, while conditions \eqref{UN11} in Theorem \ref{thm_uni_fp} are not satisfied .

\end{rem}

    \section{Uniqueness vs Contraction}
    It is a well-known that a strict contractive operator $V$ has a unique fixed point, and all trajectories of $V$ converge to that point with exponential rate.
    Hence, it is interesting to know whether $ b- $bistochastic q.s.o. with unique fixed point is a strict contraction.
    In this section, we show it is not so.
    For the sake of simplicity, in this section, we restrict ourselves to one and two dimensional cases, respectively.

    Let us recall that
    a stochastic operator $V$ is said to be \textit{strict contraction} if there exist $\a \in [0,1)$ such that
    \[ \|V(\xb) - V(\yb)\| \leq \a \| \xb - \yb \|, \ \ \xb,\yb \in S^{n-1}. \]
    Here, $ \| \xb\| = \sum_{i=1}^{ n} |x_i| $.

It is known \cite{K} the following fact.

\begin{thm}\label{1SC} A q.s.o.
$V:S^{n-1} \rightarrow S^{n-1}$ is a strict contraction if and
only if
        $$\max_{i_{1}, i_{2}, k}\sum_{j=1}^{n}|p_{i_{1}k,j} -
        p_{i_{2}k,j}|<1.
        $$
    \end{thm}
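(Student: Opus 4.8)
The plan is to derive an exact identity for $V(\xb)_k-V(\yb)_k$ in terms of $\xb-\yb$ and then read off the best possible contraction constant. Writing $z_i=x_i-y_i$ and using the symmetrization
\[
x_ix_j-y_iy_j=\tfrac12\bigl[(x_i-y_i)(x_j+y_j)+(x_i+y_i)(x_j-y_j)\bigr]
\]
together with $P_{ij,k}=P_{ji,k}$, the two symmetrized pieces coincide after relabelling $i\leftrightarrow j$, giving
\[
V(\xb)_k-V(\yb)_k=\sum_{i,l=1}^n P_{il,k}\,z_i\,(x_l+y_l).
\]
The crucial step is to exploit $\sum_i z_i=0$: splitting $z$ into positive and negative parts $z^{+},z^{-}$ with $\sum_i z_i^{+}=\sum_i z_i^{-}=t:=\tfrac12\|\xb-\yb\|$, one checks (for $t>0$) that for each fixed $l$
\[
\sum_{i=1}^n P_{il,k}\,z_i=\frac1t\sum_{i_1,i_2=1}^n\bigl(P_{i_1l,k}-P_{i_2l,k}\bigr)z_{i_1}^{+}z_{i_2}^{-},
\]
so the coefficients $P_{il,k}$ get replaced by the pairwise differences $P_{i_1l,k}-P_{i_2l,k}$, which are precisely the quantities in the claimed criterion.

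For sufficiency I would substitute this identity into the formula for $V(\xb)_k-V(\yb)_k$, apply the triangle inequality in $k$, and factor. Setting $M:=\max_{i_1,i_2,k}\sum_{j=1}^n|P_{i_1k,j}-P_{i_2k,j}|$ and using $\sum_l(x_l+y_l)=2$, $(\sum_i z_i^{+})(\sum_i z_i^{-})=t^2$ and $\|\xb-\yb\|=2t$, the estimate telescopes to
\[
\|V(\xb)-V(\yb)\|\ \le\ \frac{M}{t}\Bigl(\sum_l(x_l+y_l)\Bigr)t^2\ =\ 2Mt\ =\ M\,\|\xb-\yb\|,
\]
which also holds trivially when $\xb=\yb$. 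Hence $M<1$ makes $V$ a strict contraction, with constant exactly $M$.

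For necessity I would show the bound is asymptotically attained near a vertex. Assume $M\ge 1$ and pick a maximizing triple $(i_1,i_2,k)$; since the inner sum vanishes when $i_1=i_2$, necessarily $i_1\ne i_2$. Take $\xb=(1-\eps)\eb_k+\eps\,\eb_{i_1}$ and $\yb=(1-\eps)\eb_k+\eps\,\eb_{i_2}$ in $S^{n-1}$, so $\|\xb-\yb\|=2\eps$, $z^{+}=\eps\,\eb_{i_1}$, $z^{-}=\eps\,\eb_{i_2}$ and $t=\eps$. Feeding this into the identity gives
\[
V(\xb)_j-V(\yb)_j=\eps(2-2\eps)\bigl(P_{i_1k,j}-P_{i_2k,j}\bigr)+O(\eps^2),
\]
hence $\|V(\xb)-V(\yb)\|=2M\eps+O(\eps^2)$, and the ratio $\|V(\xb)-V(\yb)\|/\|\xb-\yb\|=M+O(\eps)$ tends to $M\ge 1$ as $\eps\to 0$. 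This contradicts the existence of any $\a\in[0,1)$ with $\|V(\xb)-V(\yb)\|\le\a\|\xb-\yb\|$ for all $\xb,\yb$, so $V$ is not a strict contraction; combining the two implications yields the equivalence.

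The main obstacle is pinning down the sharp constant. The naive bound $|V(\xb)_k-V(\yb)_k|\le(\max_{i,l}P_{il,k})\,\|\xb-\yb\|$ is far too lossy, and it is precisely the constraint $\sum_i(x_i-y_i)=0$ — used to pass from $P_{il,k}$ to the differences $P_{i_1l,k}-P_{i_2l,k}$ — that produces the correct quantity $M$. Once that identity is in place, both the contraction estimate and the extremal example near a vertex are straightforward computations.
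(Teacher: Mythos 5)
Your argument is correct. Note that the paper does not prove this statement at all: it is imported verbatim from Kesten's 1970 paper (reference [K]) and used as a black box, so there is no in-paper proof to compare against. Your proof is a sound, self-contained derivation. The two key identities check out: the symmetrized difference $V(\xb)_k-V(\yb)_k=\sum_{i,l}P_{il,k}\,z_i(x_l+y_l)$ follows from $P_{ij,k}=P_{ji,k}$, and the replacement $\sum_i P_{il,k}z_i=\tfrac1t\sum_{i_1,i_2}(P_{i_1l,k}-P_{i_2l,k})z_{i_1}^{+}z_{i_2}^{-}$ is an exact consequence of $\sum_i z_i^{+}=\sum_i z_i^{-}=t$; summing the resulting bound over the output index and using $\sum_l(x_l+y_l)=2$ gives the Lipschitz constant $M$ exactly, and the perturbation $\xb=(1-\eps)\eb_k+\eps\eb_{i_1}$, $\yb=(1-\eps)\eb_k+\eps\eb_{i_2}$ shows $M$ is attained in the limit $\eps\to 0$, so $M\ge 1$ rules out every $\a<1$. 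Two small points worth making explicit in a final write-up: (a) in the necessity step the degenerate cases $i_1=k$ or $i_2=k$ (where $\xb$ or $\yb$ collapses to the vertex $\eb_k$) still give $x_k+y_k=2+O(\eps)$ and hence the same leading term $2\eps(P_{i_1k,j}-P_{i_2k,j})$, so the computation survives; and (b) when $M=1$ the ratio $M+O(\eps)$ need not exceed $1$ for any fixed $\eps$, but since it tends to $1$ it eventually exceeds any prescribed $\a<1$, which is exactly the negation of strict contractivity. With those remarks the equivalence is complete, and as a bonus your argument identifies $M$ as the optimal Lipschitz constant of $V$ on $S^{n-1}$, which is slightly more than the cited statement claims.
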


    From this theorem we immediately find the following results.

    \begin{thm}\label{thm_strt_con_1D}
        Let $V:S^{1}\rightarrow S^{1}$ be a $b-$bistochastic q.s.o., then $V$ is a strict contraction if and only if
        $$ \max\{P_{12,1}; |P_{11,1}-P_{12,1}| \} < \dfrac{1}{2}. $$
    \end{thm}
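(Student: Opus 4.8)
The plan is to specialize Theorem~\ref{1SC} to $n=2$ and simplify the resulting maximum with the help of Theorem~\ref{thm_des_b-bstc_2D} and the normalization in \eqref{eqn_propty_HC}.

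First I would list the surviving coefficients. When $n=2$ all indices lie in $\{1,2\}$; part (ii) of Theorem~\ref{thm_des_b-bstc_2D} (with $k=1$) gives $P_{22,1}=0$ and part (iii) gives $P_{22,2}=1$. Together with $\sum_{k=1}^2 P_{ij,k}=1$ and the symmetry $P_{ij,k}=P_{ji,k}$, this leaves only $P_{11,1}$ and $P_{12,1}$ as free parameters, with $P_{11,2}=1-P_{11,1}$ and $P_{12,2}=1-P_{12,1}$.

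Next I would evaluate the quantity $\sum_{j=1}^2|P_{i_1k,j}-P_{i_2k,j}|$ appearing in Theorem~\ref{1SC} over all $i_1,i_2,k\in\{1,2\}$. Choices with $i_1=i_2$ contribute $0$, so only $\{i_1,i_2\}=\{1,2\}$ is relevant. For $k=1$ the sum is $|P_{11,1}-P_{12,1}|+|P_{11,2}-P_{12,2}|$, and since $P_{11,2}-P_{12,2}=P_{12,1}-P_{11,1}$ this equals $2|P_{11,1}-P_{12,1}|$. For $k=2$ the sum is $|P_{12,1}-0|+|P_{12,2}-1|=P_{12,1}+P_{12,1}=2P_{12,1}$. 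Hence the maximum in Theorem~\ref{1SC} equals $2\max\{\,|P_{11,1}-P_{12,1}|,\ P_{12,1}\,\}$, and requiring it to be $<1$ is precisely the stated criterion.

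I do not expect a genuine obstacle here: the statement is a direct specialization of Theorem~\ref{1SC}, and the only thing that demands care is the bookkeeping of index ranges — making sure every triple $(i_1,i_2,k)$ is accounted for and that $P_{12,k}=P_{21,k}$ is invoked so as to keep the list of cases minimal.
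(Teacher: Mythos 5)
Your proposal is correct and is exactly the paper's intended argument: the paper derives this theorem directly from Theorem~\ref{1SC} (stating only that it "immediately" follows), and your computation --- reducing via $P_{22,1}=0$, $P_{22,2}=1$, and the normalization to the two quantities $2|P_{11,1}-P_{12,1}|$ (for $k=1$) and $2P_{12,1}$ (for $k=2$) --- is the bookkeeping the paper leaves implicit.
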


    \begin{thm} \label{thm_strt_con_2D}
        Let $V: S^{2} \rightarrow S^{2}$ be a $b-$bistochastic q.s.o. Then $V$ is a strict contraction
        if and only if
        the maximum value of the following quantities is strictly less than 1.
        \begin{itemize}
            \item[(a)] $ \left| A_{{1}}-B_{{1}} \right| + \left| A_{{2}}-B_{{2}} \right| +
            \left| A_{{1}}+A_{{2}}-B_{{1}}-B_{{2}} \right|
            $
            \item[(b)]$ B_{{1}}+ \left| B_{{2}}-D_{{2}} \right| + \left| B_{{1}}+B_{{2}}-D_{{2
                }} \right|
                $
                \item[(c)]$C_{{1}}+ \left| C_{{2}}-E_{{2}} \right| + \left| C_{{1}}+C_{{2}}-E_{{2
                    }} \right|
                    $
                    \item[(d)]$\left| A_{{1}}-C_{{1}} \right| + \left| A_{{2}}-C_{{2}} \right| +
                    \left| A_{{1}}+A_{{2}}-C_{{1}}-C_{{2}} \right|
                    $
                    \item[(e)]$B_{{1}}+ \left| B_{{2}}-E_{{2}} \right| + \left| B_{{1}}+B_{{2}}-E_{{2
                        }} \right|
                        $
                        \item[(f)]$2\,C_{{1}}+2\,C_{{2}} $

                        \item[(g)]$ \left| B_{{1}}-C_{{1}} \right| + \left| B_{{2}}-C_{{2}} \right| +
                        \left| B_{{1}}+B_{{2}}-C_{{1}}-C_{{2}} \right|
                        $
                        \item[(h)]$2\, \left| D_{{2}}-E_{{2}} \right|  $
                        \item[(i)]$2\,E_{{2}} $
                    \end{itemize}
                    where the notations of heredity coefficients used here are the same as given by \eqref{notation_on_2D}.
                \end{thm}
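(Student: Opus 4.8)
The plan is to specialize Theorem \ref{1SC} to $n=3$ (recall that $S^{2}$ is the simplex $S^{n-1}$ with $n=3$) and to enumerate the finitely many quantities $\sum_{j=1}^{3}|p_{i_{1}k,j}-p_{i_{2}k,j}|$ occurring in its statement. First I would observe that each such quantity is symmetric under interchanging $i_{1}$ and $i_{2}$ and vanishes when $i_{1}=i_{2}$; hence for each fixed $k\in\{1,2,3\}$ it suffices to examine the three unordered pairs $\{i_{1},i_{2}\}\in\{\{1,2\},\{1,3\},\{2,3\}\}$, giving exactly nine candidate expressions, and by Theorem \ref{1SC} the operator $V$ is a strict contraction precisely when the largest of these nine numbers is $<1$.

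Next I would record which heredity coefficients vanish. Since $V$ is $b$-bistochastic, part (ii) of Theorem \ref{Prop_Pprties_V} forces $P_{ij,k}=0$ whenever $i,j\in\{k+1,\dots,n\}$; for $n=3$ this means $P_{22,1}=P_{23,1}=P_{33,1}=P_{33,2}=0$, that is $D_{1}=E_{1}=F_{1}=F_{2}=0$ in the notation \eqref{notation_on_2D}, while part (iii) gives $P_{33,3}=1$. Combining this with the normalization $\sum_{k=1}^{3}P_{ij,k}=1$ from \eqref{eqn_propty_HC}, I can express the $k=3$ entry of every coefficient triple through the first two, e.g. $P_{11,3}=1-A_{1}-A_{2}$, $P_{12,3}=1-B_{1}-B_{2}$, $P_{13,3}=1-C_{1}-C_{2}$, $P_{22,3}=1-D_{2}$, $P_{23,3}=1-E_{2}$, keeping in mind the symmetry $P_{ij,k}=P_{ji,k}$ (so that, for instance, $p_{31,j}$ must be read as $P_{13,j}$).

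Then I would carry out the nine substitutions. The pairs for $k=1$ yield the expressions (a), (d), (g); those for $k=2$ yield (b), (e), (h); those for $k=3$ yield (c), (f), (i). In several of the nine sums the absolute values collapse because the coefficients are nonnegative; for example, for $k=3$ and $\{i_{1},i_{2}\}=\{1,3\}$ the three terms are $C_{1}$, $C_{2}$ and $|(1-C_{1}-C_{2})-1|=C_{1}+C_{2}$, summing to $2C_{1}+2C_{2}$, and for $\{i_{1},i_{2}\}=\{2,3\}$ one similarly gets $0+E_{2}+E_{2}=2E_{2}$. Matching each of the nine resulting expressions with the corresponding item of the list and invoking Theorem \ref{1SC} then completes the proof.

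The argument is essentially mechanical bookkeeping, so I do not expect a genuine obstacle; the only points needing a little care are respecting the index conventions (in particular using $p_{ik,j}=P_{ik,j}$ together with the symmetry $P_{ij,k}=P_{ji,k}$ when reading off rows such as $p_{3k,j}$) and correctly simplifying those of the nine absolute-value sums in which cancellations occur.
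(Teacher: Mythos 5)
Your proposal is correct and is exactly the argument the paper intends: the theorem is stated as an immediate consequence of Theorem \ref{1SC}, obtained by enumerating the nine quantities $\sum_{j=1}^{3}\left|P_{i_{1}k,j}-P_{i_{2}k,j}\right|$ after using parts (ii) and (iii) of Theorem \ref{Prop_Pprties_V} and the normalization in \eqref{eqn_propty_HC} to eliminate $D_1,E_1,F_1,F_2$ and the $j=3$ entries. Your grouping of the nine cases by $k$ and your sample simplifications (items (f) and (i)) all check out.
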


                In both cases (i.e one and two dimensional simplices), the uniqueness of the fixed point of $ b- $bistochastic q.s.o.
                does not imply their strict contractivity. Indeed,
                we consider the following examples.

                \begin{exm}
                    Let $ V $ be given by the coefficients $ P_{11,1}=\frac{2}{3} $ and $ P_{12,1}=0 $.
                    Due to  $2(P_{11,1}-P_{12,1}) = 2(\frac{2}{3}) > 1$ one concludes that $ V $ is not a strict contraction.
                    Yet, it is still a $b-$bistochastic q.s.o. and it converges to a unique fixed point $ (0,1) $
                    (see Theorem \ref{thm_uni_fp}).\\ %ne has
                    %$$\lim\limits_{m \rightarrow \infty } V^{(m)}(\xb) = (0,1).$$
                \end{exm}

                \begin{exm}
                   Let us consider a q.s.o. $ V:S^{2}\rightarrow S^{2} $ given
                   by the heredity coefficients
                    \[ A_{1} = 0 \ \ B_{1} = 0 \ \ C_{1} = 0  \ \ D_{1} = 0 \ \ E_{1} = 0 \ \ F_{1} = 0, \]
                    \[ A_{2} = 1 \ \ B_{2} = 0  \ \ C_{2} = 0 \ \ D_{2} = 0 \ \ E_{2} = 0 \ \ F_{2} = 0. \]

                   According to \cite[Theorem 5.3]{ME2015} the defined operator is $b-$bistochastic q.s.o.  Moreover, one finds that
                    \[
                    \left| A_{{1}}-B_{{1}} \right| + \left| A_{{2}}-B_{{2}} \right| +
                    \left| A_{{1}}+A_{{2}}-B_{{1}}-B_{{2}} \right| = 2
                    \]
                    which implies $ V $ is not a strict contraction. On the other hand, from Theorem \ref{thm_uni_fp} one concludes that
                    V has a unique fixed point $ (0,0,1) $.
            \end{exm}

                \section{Mixing property of nonhomogenous Markov chains associated with $ b- $Bistochastic q.s.o}

                In this section, we first construct non-homogeneous Markov measures associated with
                $b$-bistochastic q.s.o. Then we study its mixing property.

                Now, let us recall some  terminologies.
                Denote
                $ \O = \prod\limits_{i=0}^{\infty} E_i\ , \ E_i= \{1,\dots, n\} $. %= \{\w= (\w_i)| \w_i\in E\}.$$
                A subset of $ \O $ given by
                %$$A(i_1,i_2,\dots, i_k) = \left\{ \w \in \S | \w_{i_1} = i_1, \dots, w_{i_k} = i_k \right\}$$
                \[  A^{[l,m]}(i_{l},\dots,i_{m}) = \{ \w \in \O \ | \ \w_l = i_l, \dots, \w_m = i_m\} , \]
                %\[ A_k^{(m)}(i_k,i_{k+1},\dots, i_m) = \{ \w \in \O \ \vert\  \w_k = i_k,\w_{k+1} = i_{k+1} \dots \w_{m} = i_m \}. \]
                is called \textit{thin cylindrical} set, where $ i_{k} \in \{ 1,\dots,n \} $. By $ \gf $ we denote the $\s-$algebra generated by thin cylindrical sets.
                %The $ \s- $ algebra $ \gf $ is also generated by what shall be called \textit{thin} cylinders, that is sets of the form $ \{ \w:w_k = i_k \ \vert \ l \leq k \leq m \} $, where the $ i_k \in \{1,\dots, n\} $. Indeed, each cylinder is a finite, disjoint union of thin cylinders.
                Since the finite disjoint unions of thin cylinders form an algebra which generates $ \gf $, therefore a measure $ \m $ on $ \gf $ is uniquely deteremined by the values
                \begin{eqnarray}\label{eqn_mea_gen}
                \m \left( A^{[l,m]}(i_{l},\dots,i_{m}) \right) =: \m^{[l,m]} \left( A^{[l,m]}(i_{l},\dots,i_{m}) \right).
                \end{eqnarray}
                              Therefore, to define a measure on measurable space $ (\O,\gf) $, it is enough to define on the thin cylindrical sets.
                %Let $ \bp^{[m,m+1]} = \left( p_{i,j}^{[m,m+1]} \right)_{i,j \geq 0}$ be a stochastic matrix. Now we want to consider a measure $ \m $ on $ (\O, \gf) $.
                Recall that, a measure $ \m $ on $ (\O,\gf) $ is called \textit{non-homogeneous Markov measure}
                if for any sequence of states $ i,j,k_{m-1}, k_{m-2}, \dots, k_{0} $ and for any $ m \geq 0, \ m\in\bn $ one has
                \[ \m\left( \w_{m+1} =j \left| \w_{m} = i, \w_{m-1} = k_{m-1}, \dots, \w_{0} = k_{0} \right.  \right) = \m\left( \w_{m+1} =i \left| \w_{m} = j \right. \right) =: p_{i,j}^{[m,m+1]}\]
                 In a convenient way, the collection of transition probabilities at instance $ m $ can be represented in a square stochastic matrix known as \textit{transition probability matrix} $ \bp^{[m,m+1]} = \left( p_{i,j}^{[m,m+1]} \right)_{i,j \geq 0}$.
                The sequence of such matrices  $ \textbf{P} := \{\bp^{[m,m+1]}\}_{m\geq 0}$ is called a
                \textit{ non-homogeneous Markov chain}. Note that every matrix $ \bp^{[m,m+1]} $ acts on $ \xb = (x_1,x_2,\dots, x_n) \in S^{n-1} $ by
                \[ \left( \bp^{[m,m+1]} (\xb) \right)_k = \sum\limits_{i=1}^{d} x_ip_{i,k}^{[m,m+1]}, \ k=\overline{1,d} \]
                Moreover, for any natural number $k,m \  (k > m \geq 0) $, the transition  probabilities  in $ k-m $ steps $ \bp^{[m,k]} $  can be
                calculated as a composition of linear operators,
                i.e.
                \[ \bp^{[m,n]} = \bp^{[m,m+1]} \circ \bp^{[m+1,m+2]} \circ \dots \circ \bp^{[k-1,k]}. \]

Now, we are going to give a construction of a Markov measure
associated with q.s.o. Note that, this kind of construction was
first considered in
\cite{GanistogenQO,ganisarymsakovcenlimthmqc}.
                Certain properties of the associated Markov chains have been investigated in several paper
                such as \cite{pulkaMixnErPrNonMC,farL1ErNonDiscMP,farSupAkmaMarPQSP}

                Let $V:S^{n-1}\rightarrow S^{n-1}$ be a q.s.o. defined by heredity coefficients $\{P_{ij,k}\}_{i,j,k=1}^{n} $ and we denote
                $\xb_j^{(m)} = (V^{(m)}(\xb))_j, \  \xb \in S^{n-1}. $
                Denote $ E = \{1,2,\dots, n\} $ and consider $ (\O,\gf)
                $. Define a matrix $ \bh_{V,\xb}^{[k,k+1]} = \left( H_{ij,\xb}^{[k,k+1]} \right)_{i,j = 1}^{n} $ by
                \begin{eqnarray}\label{eqn_H}
                H_{ij,\xb}^{[k,k+1]} = \sum\limits_{l=1}^{n}P_{il,j}\xb_l^{(k)}, \ \xb=(x_1,\dots,x_n)\in S^{n-1}.
                \end{eqnarray}
                One easily can prove that  $ \bh_{V,\xb}^{[k,k+1]} $ is a stochastic matrix.\\
                Now, we define a measure associated with a q.s.o. $ V $ by
                \begin{eqnarray}\label{eqn_def_mea_on_cylnder_set}
                \m_{V,\xb}^{[k,m]}(A^{[k,m]}(i_k,i_{k+1},\dots, i_m)) &=& x_{i_k}^{(k)} H_{i_k,i_{k+1} ,\xb}^{[k,k+1]}H_{i_{k+1},i_{k+2},\xb}^{[k+1,k+2]}\dots H_{i_{m-1},i_m,\xb}^{[m-1, m]}.
                %& = & x_{i_n}^{(m)} H_.
                \end{eqnarray}
                The defined measures $ \m_{V,\xb}^{[k,m]} $ are non-homogeneous Markov measures (see \cite{GanistogenQO,MG2015}).
                \begin{prop}
                    The measure $ \m_{V,\xb}^{[k,m]} $ given by \eqref{eqn_def_mea_on_cylnder_set} defines a non-homogeneous Markov measure $ \m_{V,\xb} $.
                \end{prop}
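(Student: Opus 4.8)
The plan is to show that the prescription \eqref{eqn_def_mea_on_cylnder_set} defines a consistent family of finite-dimensional distributions on the thin cylinders, to invoke a Kolmogorov-type extension to obtain a genuine measure $\m_{V,\xb}$ on $(\O,\gf)$, and then to read the Markov property off the product structure of that formula.

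First I would isolate the single algebraic identity that everything rests on. Since $\xb^{(k+1)}=V(\xb^{(k)})$ and $V$ is the q.s.o. with coefficients $\{P_{ij,k}\}$, for every $j$ and every $k\geq 0$
\[ \sum_{i=1}^{n}\xb_i^{(k)}H_{ij,\xb}^{[k,k+1]}=\sum_{i,l=1}^{n}P_{il,j}\,\xb_i^{(k)}\xb_l^{(k)}=\big(V(\xb^{(k)})\big)_j=\xb_j^{(k+1)}. \]
Together with the already recorded fact that $\bh_{V,\xb}^{[k,k+1]}$ is stochastic (so $\sum_j H_{ij,\xb}^{[k,k+1]}=1$), this delivers the two marginalization relations needed for consistency: collapsing the last factor of \eqref{eqn_def_mea_on_cylnder_set} gives $\sum_{i_m}\m_{V,\xb}^{[k,m]}(A^{[k,m]}(i_k,\dots,i_m))=\m_{V,\xb}^{[k,m-1]}(A^{[k,m-1]}(i_k,\dots,i_{m-1}))$, while collapsing the first factor and using the displayed identity gives $\sum_{i_k}\m_{V,\xb}^{[k,m]}(A^{[k,m]}(i_k,\dots,i_m))=\m_{V,\xb}^{[k+1,m]}(A^{[k+1,m]}(i_{k+1},\dots,i_m))$.

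Next I would pass from these finite-dimensional data to a measure on $\gf$. Every set in the algebra $\ca$ generated by the thin cylinders is a finite disjoint union of thin cylinders, and the two relations above make $\m_{V,\xb}(A^{[l,m]}(\cdots)):=\m_{V,\xb}^{[l,m]}(A^{[l,m]}(\cdots))$ well defined (independent of the chosen representation) and finitely additive on $\ca$, with total mass $\sum_{i_0}\xb_{i_0}^{(0)}=1$. Since $\O=\prod_{i\geq 0}E_i$ is a product of finite sets it is compact, and the thin cylinders are clopen; hence every decreasing sequence of nonempty members of $\ca$ has nonempty intersection, so $\m_{V,\xb}$ is continuous at $\emptyset$ and therefore $\s$-additive on $\ca$. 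Carath\'eodory's extension theorem then produces a unique extension to a probability measure on $\gf=\s(\ca)$, which is the announced $\m_{V,\xb}$, and by construction each $\m_{V,\xb}^{[k,m]}$ is its restriction to the $A^{[k,m]}$-cylinders.

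Finally I would verify the non-homogeneous Markov property directly: for states $i,j,k_{m-1},\dots,k_0$ with $\m_{V,\xb}(\w_m=i,\dots,\w_0=k_0)>0$, the ratio $\m_{V,\xb}(\w_{m+1}=j,\w_m=i,\dots,\w_0=k_0)/\m_{V,\xb}(\w_m=i,\dots,\w_0=k_0)$ computed from \eqref{eqn_def_mea_on_cylnder_set} collapses to the single surviving factor $H_{ij,\xb}^{[m,m+1]}$, and the same cancellation yields $\m_{V,\xb}(\w_{m+1}=j\mid\w_m=i)=H_{ij,\xb}^{[m,m+1]}$; thus $\m_{V,\xb}$ is non-homogeneous Markov with transition matrices $\bh_{V,\xb}^{[m,m+1]}$. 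I do not expect a genuine obstacle here: the entire substance lies in the identity $\sum_i\xb_i^{(k)}H_{ij,\xb}^{[k,k+1]}=\xb_j^{(k+1)}$, which is the only place the quadratic form of $V$ (rather than mere stochasticity) enters and which is exactly what makes the initial weights $\xb_{i_k}^{(k)}$ in \eqref{eqn_def_mea_on_cylnder_set} compatible under a shift of the time window; everything else is routine cylinder bookkeeping plus the standard extension argument, which is why the result is only a proposition.
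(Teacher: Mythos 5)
Your proof is correct and follows the standard construction; the paper itself states this proposition without proof (delegating it to the cited references of Ganikhodzhaev et al.), and your argument is precisely the one those sources use: the identity $\sum_i \xb_i^{(k)}H_{ij,\xb}^{[k,k+1]}=\xb_j^{(k+1)}$ together with stochasticity of $\bh_{V,\xb}^{[k,k+1]}$ gives consistency of the finite-dimensional distributions, compactness of $\O$ plus Carath\'eodory yields the extension to $\gf$, and the product form of \eqref{eqn_def_mea_on_cylnder_set} gives the Markov property with transition matrices $\bh_{V,\xb}^{[m,m+1]}$. You have correctly isolated the one place where the quadratic structure of $V$ (rather than mere stochasticity) is actually used, so your write-up supplies a complete proof of what the paper leaves implicit.
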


%\begin{defn}\label{defn_mixing}
 %                   A measure $ \m $ defined on $\Omega$ is said to satisfy \textit{mixing property} if for any $ A,B \in \gf $ one has
  %                  \[ \lim\limits_{m \rightarrow \infty} \left\vert \m  \{ \w_k =i , \w_{m} = j \}  - \m \{ \w_{k}=i\} \m \{\w_{m}= j\} \right\vert  = 0, \]
   %                 for any $ k \in \bn, m > k $.
    %            \end{defn}

We denote $ \s $ as the shift transformation of $\Omega$, i.e. $
\s(\w)_{n} = \w_{n+1}$, $\w\in\Omega$.

\begin{defn}\label{defn_mixing}
    A measure $ \m $ defined on $\Omega$ is said to satisfy \textit{mixing property} if for any $ A,B \in \gf $ one has
    \[ \lim\limits_{m \rightarrow \infty} \left\vert \m  \left( A \cap \s^{m}(B) \right)   - \m \left( A \right) \m \left( \s^{m}(B) \right)  \right\vert  = 0, \]
    for any $ k \in \bn, m > k $.
\end{defn}

                \begin{rem}
                    If the measure $ \m $ is shift invariant, ($  \m\left( \s^{-1}(A) \right) = \m(A)  $ for any $ A \in \gf $,
                    then the mixing reduces to the well-known notion of mixing \cite{SKF}.
                \end{rem}

                Denote a set $ A_k^{[m]}(i,j) = \{\w \in \O | \w_{k} = i, \w_{m} =j\}\nonumber $.
                Then, for the measure $ \m_{V,\xb} $ (see \ref{eqn_def_mea_on_cylnder_set}) we have
                \begin{eqnarray}
                \m_{V,\xb} (A_k^{[m]}(i,j)) = x_i^{(k)}H_{ij,\xb}^{[k,m]}, \ \ \xb \in (x_1,x_2,\dots, x_n)
                \end{eqnarray}
                and the rule of $H_{ij,\xb}^{[k,m]}$ can be calculated as follows
                \begin{eqnarray}\label{eqn_P_iterate}
                H_{ij,\xb}^{[k,m]} = \sum\limits_{l_{k+1},\dots,l_{m-1} =1}^{n}H_{il_{k+1},\xb}^{[k,k+1]}H_{l_{k+1}l_{k+2},\xb}^{[k+1,k+2]}\dots H_{l_{m-1}j,\xb}^{[m-1,m]}\nonumber
                \end{eqnarray}
                or it can be computed as a usual matrix multiplication (let $\bh_{V,\xb}^{[k,m]} = \left(H_{ij,\xb}^{[k,m]}\right)_{i,j = 1}^{n}$)
                \begin{eqnarray}\label{eqn_mul_H}
                \bh_{V,\xb}^{[k,m]} = \bh_{V,\xb}^{[k,k+1]} \circ \bh_{V,\xb}^{[k+1,k+2]}\circ \dots \circ \bh_{V,\xb}^{[m-1,m]}.
                \end{eqnarray}

By support of $ \xb $ we mean a set $ Supp(\xb) = \left\{ i
\in E \vert x_{i} \neq 0 \right\}. $

                \begin{defn} A q.s.o.  $V$ is called
                    {\it asymptotically stable} \index{asymptotically stable} (or \textit{regular}) \index{regular} if there exists a
                    vector $\pb^*\in~S^{n-1}$ such that for all $\xb \in S^{n-1} $ one
                    has
                    $$\lim_{m\to\infty} \left\| V^{(m)}(\xb) -
                    \pb^* \right\| = 0,$$
                    where the norm $ \norm{\xb} = \sum\limits_{i =0}^{n} \abs{x_{i}} $.
                \end{defn}

                In what follow we need an auxiliary result from \cite{bolka_pulka_mix_cls_qso}.

                \begin{thm}\label{thm_asym_stb_qso}\cite{bolka_pulka_mix_cls_qso}
                    Let  $V$ be a q.s.o. The following statements are equivalent:
                    \begin{enumerate}
                        \item[(i)]  $V$ is asymptotically stable;
                        \item[(ii)]
                        there exists
                        $\pb^* \in S^{n-1} $ such that for all $\xb\in S^{n-1}$
                        and $\zb
                        \in S^{n-1} $ with $Supp(\zb)\subseteq Supp(\xb)$  we have
                        $$\lim\limits_{m\to\infty} \left\| H^{[0,m]}_{V,\xb} (\zb) - \pb^* \right\| = 0;$$
                        \item[(iii)] there exists
                        $\pb^*\in S^{n-1} $ such that for all $k\geq 0,$ and all $\xb\in
                        S^{n-1},$ $\zb \in S^{n-1} $ with $Supp(\zb)\subseteq Supp(\xb)$
                        we have
                        $$\lim\limits_{m\to\infty} \left\| H^{[k,m]}_{V,\xb} (\zb)
                        - \pb^* \right\| = 0.$$
                    \end{enumerate}\label{thm_strong_as}
                \end{thm}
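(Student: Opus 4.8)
The plan is to run the cycle of implications $(i)\Rightarrow(iii)\Rightarrow(ii)\Rightarrow(i)$, in which the last two are one-line specializations and $(i)\Rightarrow(iii)$ carries the entire weight. Everything hinges on the elementary identity
\[
H^{[k,m]}_{V,\xb}\big(V^{(k)}(\xb)\big)=V^{(m)}(\xb)\,,\qquad 0\le k\le m\,,
\]
which one checks by observing that $\big(H^{[k,k+1]}_{V,\xb}(V^{(k)}(\xb))\big)_l=\sum_{i,s}P_{is,l}\,\xb_i^{(k)}\xb_s^{(k)}=V\big(V^{(k)}(\xb)\big)_l=\xb_l^{(k+1)}$ and then composing along \eqref{eqn_mul_H}; in words, feeding the non-homogeneous chain $\{\bh^{[j,j+1]}_{V,\xb}\}$ its own current distribution $V^{(j)}(\xb)$ regenerates the q.s.o.\ orbit. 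Granting this, $(iii)\Rightarrow(ii)$ is simply the case $k=0$, and $(ii)\Rightarrow(i)$ follows by taking $\zb=\xb$ (legitimate since $Supp(\xb)\subseteq Supp(\xb)$): then $H^{[0,m]}_{V,\xb}(\xb)=V^{(m)}(\xb)$, so the requirement $\norm{H^{[0,m]}_{V,\xb}(\xb)-\pb^*}\to0$ for every $\xb$ is precisely asymptotic stability of $V$.

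For $(i)\Rightarrow(iii)$ I would fix $\xb\in S^{n-1}$, $k\ge0$ and $\zb\in S^{n-1}$ with $Supp(\zb)\subseteq Supp(\xb)$. Asymptotic stability gives $V^{(m)}(\xb)\to\pb^*$, hence by the identity the ``diagonal'' trajectory $H^{[k,m]}_{V,\xb}\big(V^{(k)}(\xb)\big)\to\pb^*$; so it suffices to show that
\[
\norm{H^{[k,m]}_{V,\xb}(\zb)-H^{[k,m]}_{V,\xb}\big(V^{(k)}(\xb)\big)}\longrightarrow 0\qquad(m\to\infty)\,.
\]
Since $H^{[k,m]}_{V,\xb}$ is linear, equal by \eqref{eqn_mul_H} to the product of stochastic matrices $\bh^{[k,k+1]}_{V,\xb}\circ\cdots\circ\bh^{[m-1,m]}_{V,\xb}$, and applied to the zero-sum vector $\zb-V^{(k)}(\xb)$ of norm at most $2$, Dobrushin's contraction estimate bounds the left-hand side by $2\,\delta\big(\bh^{[k,m]}_{V,\xb}\big)$, where $\delta(A)=\frac{1}{2}\max_{i_1,i_2}\sum_l\abs{a_{i_1l}-a_{i_2l}}$ is the ergodicity coefficient. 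Thus the whole problem reduces to weak ergodicity of the chain, i.e.\ to $\delta\big(\bh^{[k,m]}_{V,\xb}\big)\to0$ as $m\to\infty$.

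To establish weak ergodicity, I would exploit that the stochastic matrices $\bh^{[j,j+1]}_{V,\xb}$ have entries $H_{il,\xb}^{[j,j+1]}=\sum_s P_{is,l}\,\xb_s^{(j)}$ that are linear in $V^{(j)}(\xb)$; since $V^{(j)}(\xb)\to\pb^*$, these matrices converge entrywise (uniformly in the row index) to the constant stochastic matrix $\bar\bh$ with $(i,l)$-entry $\sum_s P_{is,l}\,p^*_s$, and a one-line computation gives $\pb^*\bar\bh=V(\pb^*)=\pb^*$ (note that under $(i)$ the operator $V$ has a unique fixed point, namely $\pb^*$, since any fixed point is its own orbit limit). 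So the chain $\{\bh^{[j,j+1]}_{V,\xb}\}$ is a vanishing perturbation of the constant chain with matrix $\bar\bh$, and the problem reduces to the ergodic behaviour of $\bar\bh$ on the coordinates that $\zb$ and $V^{(k)}(\xb)$ actually charge --- this is exactly where the hypothesis $Supp(\zb)\subseteq Supp(\xb)$ is used, to confine attention to those states and discard the transient ones. A splitting $\bh^{[k,m]}_{V,\xb}=\bh^{[k,M]}_{V,\xb}\,\bh^{[M,m]}_{V,\xb}$ with $M$ chosen so large that $\bh^{[j,j+1]}_{V,\xb}$ stays uniformly close to $\bar\bh$ for all $j\ge M$, together with submultiplicativity of $\delta$ and continuity of finite products of stochastic matrices, then transfers the ergodic decay of $\bar\bh^{\,t}$ to $\delta\big(\bh^{[M,m]}_{V,\xb}\big)\to0$, whence $\delta\big(\bh^{[k,m]}_{V,\xb}\big)\to0$, completing $(i)\Rightarrow(iii)$.

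I expect the genuine obstacle to be precisely this last step: converting the bare convergence $V^{(m)}(\xb)\to\pb^*$ into actual weak ergodicity of the associated non-homogeneous chain. The delicate points are that one must read off the right ergodic structure of the limiting matrix $\bar\bh$ on the accessible coordinates from the \emph{global} attractivity of $\pb^*$ (not merely from $\pb^*$ being a fixed point), and that one must control how the vanishing perturbations $\bh^{[j,j+1]}_{V,\xb}-\bar\bh$ accumulate over infinitely many steps; the support hypothesis is the device that keeps this under control. Everything else --- the key identity, the two trivial implications, and the Dobrushin bound --- is routine.
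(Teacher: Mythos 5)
The paper does not prove this theorem at all: it is imported verbatim from Bartoszek--Pulka \cite{bolka_pulka_mix_cls_qso} (and only the implication (i)$\Rightarrow$(iii) is ever used later), so there is no in-paper argument to compare yours against; your proposal has to stand on its own. The cheap parts do stand: the intertwining identity $H^{[k,m]}_{V,\xb}(V^{(k)}(\xb))=V^{(m)}(\xb)$ is correct and immediately gives (iii)$\Rightarrow$(ii)$\Rightarrow$(i). The problem is that (i)$\Rightarrow$(iii), which you correctly identify as carrying all the weight, is not actually proved. Your reduction via the Dobrushin bound $\norm{H^{[k,m]}_{V,\xb}(\zb)-H^{[k,m]}_{V,\xb}(V^{(k)}(\xb))}\leq 2\,\delta(\bh^{[k,m]}_{V,\xb})$ targets \emph{full} weak ergodicity of the chain, i.e.\ coalescence of \emph{all} rows. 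That is strictly stronger than the theorem: if $\delta(\bh^{[0,m]}_{V,\xb})\to 0$ held for every $\xb$, the conclusion of (ii) would follow for arbitrary $\zb\in S^{n-1}$ with no support restriction whatsoever, so the hypothesis $Supp(\zb)\subseteq Supp(\xb)$ would be vacuous --- its presence in the statement is a strong signal that rows indexed by states outside $Supp(\xb)$ need not coalesce, and hence that your reduction aims at a false target in general.

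When you then retreat to ``the coordinates that $\zb$ and $V^{(k)}(\xb)$ actually charge,'' the argument stops being a proof. Three things are asserted but not established: (a) that the limiting matrix $\bar\bh$ (with entries $\sum_s P_{is,l}p^*_s$) is ergodic on the relevant states --- this is a property of the \emph{linearization} of $V$ at $\pb^*$ and does not follow in one line from global attractivity of $\pb^*$; deducing it is precisely the content of the cited theorem; (b) that the relevant set of states is stable under the dynamics --- note $Supp(V^{(k)}(\xb))$ need not be contained in $Supp(\xb)$ for a general q.s.o., so the restricted Dobrushin coefficient you would need is over a set of rows you have not controlled; and (c) that the perturbations $\bh^{[j,j+1]}_{V,\xb}-\bar\bh$, which vanish but need not be summable, do not destroy the ergodic decay of $\bar\bh^{\,t}$ over infinitely many factors. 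You flag (a) and (c) yourself as ``the genuine obstacle,'' which is honest, but it means the implication is a program rather than a proof. Since the result is anyway taken from the literature, the clean fix here is to keep your identity and the two easy implications and cite \cite{bolka_pulka_mix_cls_qso} for (i)$\Rightarrow$(iii), rather than attempt to reconstruct it.
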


                Now, let us consider any $ b- $bistochastic q.s.o. $ V $ with a unique fixed point $ \bp $ .
                Due to Theorem \ref{thm_lim_pt_e} we conclude that for any $ \xb \in S^{n-1} $
                \[ \lim\limits_{m \rightarrow \infty} V^{(m)}(\xb) = \bp, \]
                (i.e $ V $ is regular). Furthermore, we consider the corresponding Markov measure $ \m_{V,\xb} $.

A main result of this section is the following theorem.

                \begin{thm}
                    Let $ V $ be a $ b- $bistochastic q.s.o. with a unique fixed point $ \bp $. Then, for every $ \xb \in ri \ S^{n-1} $,
                    the measure $ \m_{V,\xb} $ satisfies the mixing property.
                \end{thm}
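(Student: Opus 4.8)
The plan is to reduce the mixing identity to a decorrelation statement for the non-homogeneous transition kernels $\bh^{[\cdot,\cdot]}_{V,\xb}$ and then to close it using the asymptotic stability of $V$. The one fact that drives everything is that $V$ is asymptotically stable with limit $\bp$: by Theorem~\ref{thm_lim_pt_e} the limit $\lim_{m}V^{(m)}(\yb)$ exists for every $\yb\in S^{n-1}$ and is a fixed point of $V$, and since $\bp$ is the \emph{unique} fixed point this limit equals $\bp$ for all $\yb$, i.e. $V$ is regular with $\pb^{*}=\bp$. Moreover, as $\xb\in ri \ S^{n-1}$ we have $Supp(\xb)=\{1,\dots,n\}$, so $Supp(\eb_{i})=\{i\}\subseteq Supp(\xb)$ for every $i$; hence Theorem~\ref{thm_asym_stb_qso}(iii), applied with $\zb=\eb_{i}$, gives $\bigl\|\bh^{[k,m]}_{V,\xb}(\eb_{i})-\bp\bigr\|\to 0$ as $m\to\infty$ for each fixed $k$ and each $i$. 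In particular every row of $\bh^{[k,m]}_{V,\xb}$ converges to $\bp$, so $\max_{i,i'}\bigl\|\bh^{[k,m]}_{V,\xb}(\eb_{i})-\bh^{[k,m]}_{V,\xb}(\eb_{i'})\bigr\|\to 0$.

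Next I would prove the mixing identity first for $A$ a thin cylinder and $B\in\gf$ \emph{arbitrary}. Write $A=A^{[k,l]}(i_{k},\dots,i_{l})$. For $m>l$ the set $\s^{m}(B)$ depends only on the coordinates $\w_{m},\w_{m+1},\dots$, hence lies in the ``future of time $l$''. Conditioning on the present coordinate $\w_{l}$ and using the Markov property of $\m_{V,\xb}$, one obtains $\m_{V,\xb}\bigl(\s^{m}(B)\mid\w_{0},\dots,\w_{l}\bigr)=\psi_{m}(\w_{l})$, where $\psi_{m}(i):=\m_{V,\xb}\bigl(\s^{m}(B)\mid\w_{l}=i\bigr)$. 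Since $\w_{l}\equiv i_{l}$ on $A$ and, by \eqref{eqn_def_mea_on_cylnder_set}, $\m_{V,\xb}(\w_{l}=i)=\xb_{i}^{(l)}$, integrating yields $\m_{V,\xb}(A\cap\s^{m}(B))=\psi_{m}(i_{l})\,\m_{V,\xb}(A)$ and $\m_{V,\xb}(\s^{m}(B))=\sum_{i}\xb_{i}^{(l)}\psi_{m}(i)$, so the difference $\m_{V,\xb}(A\cap\s^{m}(B))-\m_{V,\xb}(A)\,\m_{V,\xb}(\s^{m}(B))$ equals $\m_{V,\xb}(A)\sum_{i}\xb_{i}^{(l)}\bigl(\psi_{m}(i_{l})-\psi_{m}(i)\bigr)$ and hence
\[
\bigl|\m_{V,\xb}(A\cap\s^{m}(B))-\m_{V,\xb}(A)\,\m_{V,\xb}(\s^{m}(B))\bigr|\le\max_{i,i'}\bigl|\psi_{m}(i)-\psi_{m}(i')\bigr|.
\]
Conditioning once more at time $m$, the conditional law of $(\w_{m},\w_{m+1},\dots)$ given $\{\w_{l}=i\}$ is driven by the same kernels $\bh^{[m,m+1]}_{V,\xb},\bh^{[m+1,m+2]}_{V,\xb},\dots$ for every $i$ --- only the distribution of $\w_{m}$, which is the $i$-th row $\bigl(H_{ia,\xb}^{[l,m]}\bigr)_{a}=\bh^{[l,m]}_{V,\xb}(\eb_{i})$, depends on $i$. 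Hence $\psi_{m}(i)=\sum_{a}H_{ia,\xb}^{[l,m]}\,\phi_{m}(a)$ with $\phi_{m}(a):=\m_{V,\xb}\bigl(\s^{m}(B)\mid\w_{m}=a\bigr)\in[0,1]$, and therefore $\bigl|\psi_{m}(i)-\psi_{m}(i')\bigr|\le\bigl\|\bh^{[l,m]}_{V,\xb}(\eb_{i})-\bh^{[l,m]}_{V,\xb}(\eb_{i'})\bigr\|\to 0$ by the first paragraph. This settles the cylinder case.

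The remaining step --- passage to arbitrary $A,B\in\gf$ --- is the one requiring care, and I expect it to be the main obstacle. One \emph{cannot} run an approximation argument on $B$, because the pushforwards $\m_{V,\xb}\circ\s^{-m}$ are not uniformly absolutely continuous with respect to $\m_{V,\xb}$ (for instance $\m_{V,\xb}(\s^{-m}\{\w_{0}=n\})=\xb_{n}^{(m)}\to 1$, whereas $\m_{V,\xb}(\{\w_{0}=n\})=x_{n}$ may be arbitrarily small). Instead, keep $B\in\gf$ arbitrary --- the previous paragraph already covers this --- and approximate only $A$ by finite disjoint unions of thin cylinders in $\m_{V,\xb}$-measure; since the shift is applied only to $B$, for any such approximant $A'$ both $\bigl|\m_{V,\xb}(A\cap\s^{m}(B))-\m_{V,\xb}(A'\cap\s^{m}(B))\bigr|$ and $\bigl|\m_{V,\xb}(A)-\m_{V,\xb}(A')\bigr|\,\m_{V,\xb}(\s^{m}(B))$ are bounded by $\m_{V,\xb}(A\triangle A')$ uniformly in $m$, while the cylinder case together with linearity in $A$ handles $A'$. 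Letting the approximation error tend to zero then gives the mixing property for all $A,B\in\gf$. Apart from this point, the proof is just routine bookkeeping with the kernels $\bh^{[\cdot,\cdot]}_{V,\xb}$.
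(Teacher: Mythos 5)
Your proof is correct, and its core coincides with the paper's: uniqueness of the fixed point together with Theorem~\ref{thm_lim_pt_e} gives regularity of $V$ with limit $\bp$; since $\xb\in ri\ S^{n-1}$ one has $Supp(\eb_{i})\subseteq Supp(\xb)$ for every $i$, so Theorem~\ref{thm_asym_stb_qso}(iii) forces every row of $\bh^{[l,m]}_{V,\xb}$ to converge to $\bp$; and the correlation defect for a cylinder $A$ is controlled by the spread of these rows. The paper carries this out by writing $\m_{V,\xb}(A\cap\s^{m}(B))$ and $\m_{V,\xb}(A)\m_{V,\xb}(\s^{m}(B))$ explicitly for \emph{two} thin cylinders and bounding the difference by $\bigl|H^{[l,s+m]}_{i_{l}j_{s},\xb}-\xb^{(s+m)}_{j_{s}}\bigr|$, which is your estimate specialized to cylindrical $B$ (note $\xb^{(s+m)}_{j_{s}}=\sum_{i}\xb^{(l)}_{i}H^{[l,s+m]}_{ij_{s},\xb}$ is a convex combination of the rows, so the two bounds are interchangeable). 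Where you genuinely differ, and improve on the paper, is the passage to general $A,B\in\gf$: the paper dispatches this with a one-line ``density argument'', but, as you correctly observe, approximating $B$ by cylinders does not work uniformly in $m$ because $\m_{V,\xb}$ is not shift-invariant, so smallness of $\m_{V,\xb}(B\triangle B')$ does not control $\m_{V,\xb}(\s^{-m}(B)\triangle\s^{-m}(B'))$. Your remedy --- establish the estimate for cylindrical $A$ and \emph{arbitrary} $B$ directly via the Markov property, then approximate only $A$, for which the symmetric-difference error is uniform in $m$ --- is exactly the right fix and closes a gap the paper leaves open. The only cosmetic point is that $\psi_{m}(i)$ should be defined through the kernel formula $\sum_{a}H^{[l,m]}_{ia,\xb}\phi_{m}(a)$ when $\xb^{(l)}_{i}=0$ (such indices carry no mass, so the bound is unaffected); with that understood, the argument is complete.
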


                \begin{proof}
                    Since the algebra $ \gf  $ is generated by thin cylindrical set, due to density argument, it is enough to proof the theorem for the thin cylindrical set.
                    Let $ A = A^{[k,l]}(i_{k}, \dots, i_{l}) $ and $ B=B^{[s,t]}(j_{s},\dots, j_{t}) $
                    and
                    \begin{eqnarray}\label{eqn_mea}
                    \t_{m} := \left\vert \m  \left( A \cap \s^{m}(B) \right)   - \m \left( A \right) \m \left( \s^{m}(B) \right)  \right\vert.
                    \end{eqnarray}
                    It is clear that
                    \begin{eqnarray}
                    \m\left( A \cap \s^{m}(B) \right) & = & \xb_{i_{k}}^{(k)}H_{i_{k}i_{k+1},\xb}^{[k,k+1]}\cdots H_{i_{l-1}i_{l},\xb}^{[l-1,l]}
                    %\cdot
                    H_{i_{l}j_{s},\xb}^{[l,s+m]}
                    H_{j_{s}j_{s+1},\xb}^{[s+m,s+m+1]} \cdots
                    H_{j_{t-1}j_{t},\xb}^{[t-1+m,t+m]} \label{eqn_m_inter} \\
                    \m \left( A \right) \m \left( \s^{m}(B) \right) & = & \xb_{i_{k}}^{(k)}H_{i_{k}i_{k+1},\xb}^{[k,k+1]}\cdots H_{i_{l-1}i_{l},\xb}^{[l-1,l]}
                    %\cdot
                    \xb_{j_{s}}^{(s+m)}
                    H_{j_{s}j_{s+1},\xb}^{[s+m,s+m+1]} \cdots
                    H_{j_{t-1}j_{t},\xb}^{[t-1+m,t+m]} \label{eqn_m_multip}
                    \end{eqnarray}
               From \eqref{eqn_m_inter} and \eqref{eqn_m_multip} we find
               \[ \t_{m} = \xb_{i_{k}}^{(k)}H_{i_{k}i_{k+1},\xb}^{[k,k+1]}\cdots H_{i_{l-1}i_{l},\xb}^{[l-1,l]}
               H_{j_{s}j_{s+1},\xb}^{[s+m,s+m+1]} \cdots
               H_{j_{t-1}j_{t},\xb}^{[t-1+m,t+m]}
               \left\vert
               H_{i_{l}j_{s},\xb}^{[l,s+m]} - \xb_{j_{s}}^{(s+m)}
               \right\vert
                \]
               Due to stochasticity, one infers that
               \[ \t_{m} \leq \left\vert
               H_{i_{l}j_{s},\xb}^{[l,s+m]} - \xb_{j_{s}}^{(s+m)}
               \right\vert \]
                    Asymptotic stability of $ V $ and Theorem \ref{thm_asym_stb_qso} implies
                    \[ \lim\limits_{m \rightarrow \infty }H_{V,\xb}^{[k,m]}(\eb_{i}) = \bp \textmd{ and } \lim\limits_{m \rightarrow \infty }\xb^{(m)} = \bp \]
                    Therefore, $ \lim\limits_{m \rightarrow \infty }\left\vert
                    H_{i_{l}j_{s},\xb}^{[l,s+m]} - \xb_{j_{s}}^{(s+m)}
                    \right\vert =0  $, which gives $ \lim\limits_{m \rightarrow \infty } \t_{m} =0 $. This completes the proof.
                \end{proof}

                \section{Absolute Continuity of Corresponding Markov Measures}

In the previous section, we have defined non-homogeneous Markov
measures associated with q.s.o. It is clear that that these
measures depend on the initial state of $\xb$. Given q.s.o. it is
interesting to know how the measures $\m_{V,\xb}$ relate to each
other for different initial states. In this section, we are going
to examine the absolute continuity of these measures for
$b$-bistochastic q.s.o.

Let us recall some necessary notions and notations. Let $(\O,
\mathfrak{F})$  be a measurable space as introduced before. Assume
that the space is equipped with  two different probability
measures $\m$ and $\bar{\m}$. In addition, suppose on the space
given there is defined a family $(\mathfrak{F}_m)_{m-1}$ of
$\s-$algebras such that $\gf_1 \subseteq \gf_2 \subseteq \dots
\subseteq \gf$ and
                \begin{eqnarray}
                \gf = \s\left( \bigcup\limits_{i=1}^{\infty} \gf_i \right)
                \end{eqnarray}
Moreover, for each $\m_m$ and $\bar{\m}_m$, we restrict the
measure on the defined space $ (\O, \gf_m)$. Let $B \in \gf_m$ and
$  \m_m(B) = \m(B), \ \  \bar{\m}_m(B) = \bar{\m}(B) $.
                %\[ \m_m(B) = \m(B) \ \ \ \ \bar{\m}_m(B) = \bar{\m}(B)\]

\begin{defn}
Let $\m$ and $\bar{\m}$ be two probabilistic measures. It is
called that $\bar{\m}$ is \textit{absolutely continuous} with
respect to $\m$ (write it as $\bar{\m} \ll \m$) if $\bar{\m}(A) =
0$ whenever $\m(A) = 0, A\in \gf$. In addition, if $\m \ll
\bar{\m}$ and $\bar{\m} \ll \m$, then $\m$ and $\bar{\m}$ are
equivalent (in short $\bar{\m} \ \sim \ \m$)

Besides, $\m$ and $\bar{\m}$ are \textit{singular} (or
\textit{orthogonal} $ (\bar{\m} \perp \m) $) if there is a set $A
\in \gf $ such that $\bar{\m} (A)=1$ and $\m(A^c) = 1 \ $
                \end{defn}

                %Definition above gives general definition on the space $(\O, \gf)$, thus it is natural to provide componentwise definition on $(\O, \gf_m)$.

                \begin{defn} A measure
                    $\bar{\m}$ is \textit{locally absolutely continuous} with respect to measure $\m$ (notation, $\bar{\m}\ll^{loc}\m$) if $ \bar{\m}_m \ll \m_m \ \ \textmd{for every} \ \ m\geq1 $
                    \[ \bar{\m}_m \ll \m_m \ \ \textmd{for every} \ \ m\geq1 \]
                \end{defn}

                In the sequel, we will use the following result.
                \begin{thm} \cite{shir}
                    )
                    \label{thm_test_re_m}
                    Let there exist constants $A$ and $B$ such that $0 < A < 1, B > 0$ and
                    \begin{eqnarray} \label{thm_eqn_con_test}
                    \m\{ 1-A \leq \a_m \leq 1+B \} =1, \ \ m\geq 1
                    \end{eqnarray}
                    Then if $\bar{\m} \ll^{loc} \m$ we have
                    \[ \bar{\m} \ll \m \Leftrightarrow \bar{\m}\left\{  \sum\limits_{m=1}^{\infty} E \left( \left. (1- \a_m)^2 \right\vert \mathfrak{F}_{m-1} \right) < \infty \right\}  =1 \]
                    \[ \bar{\m} \perp \m \Leftrightarrow \bar{\m}\left\{  \sum\limits_{m=1}^{\infty} E \left( \left.  (1- \a_m)^2 \right\vert \mathfrak{F}_{m-1} \right) = \infty \right\}  =1 \]
                    where $\a_m = \frac{z_m}{z_{m-1}}$, and  $z_m$ is given by $ z_m(i_0,i_1,\dots, i_m) = \dfrac{\bar{\m}(i_0,i_1,\dots, i_m)}{\m(i_0,i_1,\dots, i_m)}.$
                \end{thm}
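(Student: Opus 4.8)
The statement is a classical predictable test for the dichotomy ``absolute continuity versus singularity'' of two measures on a filtered space, in the spirit of Kakutani's theorem and its martingale refinement; accordingly my plan is to run the standard likelihood‑ratio/martingale argument on the filtration $\gf_1\subseteq\gf_2\subseteq\cdots$. First I would record the elementary facts about the ratios. Since $\bar{\m}\ll^{loc}\m$, for every $m$ the restriction $\bar{\m}_m$ has a density $z_m=d\bar{\m}_m/d\m_m$ with respect to $\m_m$ (on the sequence space $\O$ this density is exactly the elementary ratio $\bar{\m}(i_0,\dots,i_m)/\m(i_0,\dots,i_m)$ appearing in the statement), and from $\gf_{m-1}\subseteq\gf_m$ one checks $E_\m(z_m\mid\gf_{m-1})=z_{m-1}$, so $(z_m,\gf_m)$ is a nonnegative martingale under $\m$. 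By Doob's martingale convergence theorem $z_m\to z_\infty$ $\m$-a.s. with $z_\infty<\infty$, and the Lebesgue decomposition of $\bar{\m}$ with respect to $\m$, together with the fact that $1/z_m$ is a nonnegative $\bar{\m}$-supermartingale on the $\bar{\m}$-full set $\{z_m>0\textmd{ for all }m\}$, give the standard characterisations: $\bar{\m}\ll\m$ iff $\lim_m z_m$ exists, is finite and strictly positive $\bar{\m}$-a.s. (equivalently $(z_m)$ is $\m$-uniformly integrable), while $\bar{\m}\perp\m$ iff $z_m\to0$ $\bar{\m}$-a.s.

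Next I would turn this into the series criterion. On the $\bar{\m}$-full set $\{z_m>0\textmd{ for all }m\}$ we have $z_m/z_{m-1}=\a_m$, and hypothesis \eqref{thm_eqn_con_test} forces $1-A\le\a_m\le1+B$ $\m_m$-a.s., hence $\bar{\m}_m$-a.s. as well; in particular $\log\a_m$ is uniformly bounded and $\log z_m=\log z_0+\sum_{k=1}^m\log\a_k$. Thus ``$\lim_m z_m$ finite and positive'' coincides, up to a $\bar{\m}$-null set, with ``$\sum_k\log\a_k$ converges''. I would Doob-decompose $\sum_{k\le m}\log\a_k$ under $\bar{\m}$ into a martingale $N_m$ with bounded increments and a predictable part $D_m=\sum_{k\le m}E(\log\a_k\mid\gf_{k-1})$; a bounded-increment martingale converges $\bar{\m}$-a.s. precisely on the set where its predictable quadratic variation $\langle N\rangle_\infty=\sum_kE\big((\log\a_k-E(\log\a_k\mid\gf_{k-1}))^2\mid\gf_{k-1}\big)$ is finite. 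Using that on the fixed compact interval $[1-A,1+B]$ the three quantities $(1-x)^2$, $(1-\sqrt{x})^2$ and $x-1-\log x$ are mutually comparable up to constants depending only on $A$ and $B$, together with the $\m$-martingale identity $E_\m(\a_k\mid\gf_{k-1})=1$ (which forces the first‑order part of $D_m$ to be itself of order $(1-\a_k)^2$), one shows that $\langle N\rangle_\infty<\infty$ and convergence of $D_m$ hold simultaneously, $\bar{\m}$-a.s., exactly on the event $\{\sum_mE((1-\a_m)^2\mid\gf_{m-1})<\infty\}$. Combined with the first paragraph, this yields the two stated equivalences.

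I expect the main obstacle to be precisely this last reduction, namely the bookkeeping in passing between $\m$ and $\bar{\m}$: one uses the Bayes rule $E_{\bar{\m}}(X\mid\gf_{k-1})=E_\m(\a_kX\mid\gf_{k-1})$ on $\{z_{k-1}>0\}$ and the squeeze $1-A\le\a_k\le1+B$ to see that the events $\{\sum_mE_\m((1-\a_m)^2\mid\gf_{m-1})<\infty\}$ and $\{\sum_mE_{\bar{\m}}((1-\a_m)^2\mid\gf_{m-1})<\infty\}$ differ only by a $\bar{\m}$-null set (so that the unspecified conditioning measure in the statement is immaterial), and one must check that the first‑order drift terms $E\big(\log\a_k-(\a_k-1)\mid\gf_{k-1}\big)$, although of no definite sign a priori, are again of order $(1-\a_k)^2$ and hence do not affect convergence. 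The remaining ingredients --- Doob's and the supermartingale convergence theorems, uniform integrability, and the Lebesgue decomposition --- are routine. Since this is a known result quoted from \cite{shir}, within the present paper it suffices to invoke it; the outline above indicates the standard route to a self-contained proof.
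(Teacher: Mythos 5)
This theorem is imported verbatim from Shiryaev \cite{shir}; the paper gives no proof of it, so there is no internal argument to compare against --- within the paper the citation is all that is required, as you yourself note. Your outline is nonetheless a correct sketch of the standard proof and essentially the one in \cite{shir}: the density process $z_m$ is a nonnegative $\m$-martingale, $1/z_m$ is a $\bar{\m}$-supermartingale, and the Lebesgue decomposition reduces everything to whether $\lim_m z_m$ is finite $\bar{\m}$-a.s. The only point where you deviate from Shiryaev's route is the final reduction: his general criterion is phrased with the Hellinger-type sums $\sum_m E\bigl(\bigl(1-\sqrt{\a_m}\bigr)^2\mid\gf_{m-1}\bigr)$, and the hypothesis $\m\{1-A\le\a_m\le 1+B\}=1$ enters only to replace $\bigl(1-\sqrt{\a_m}\bigr)^2$ by the comparable quantity $(1-\a_m)^2$; you instead Doob-decompose $\sum_k\log\a_k$ and use a bounded-increment martingale convergence argument. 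Under the stated boundedness the two are equivalent, and your key observation --- that Bayes' rule together with $E_\m(\a_k\mid\gf_{k-1})=1$ on $\{z_{k-1}>0\}$ gives $E_{\bar{\m}}(\a_k-1\mid\gf_{k-1})=E_\m\bigl((\a_k-1)^2\mid\gf_{k-1}\bigr)$, so the drift is controlled by the very series in the criterion --- is exactly what makes the log-based variant close. No gap; just be aware that the square-root version is the one that survives without the boundedness hypothesis.
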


                It worth to note the following remark.

                \begin{rem}\label{rem_exptn}
                    Let us assume $ \left( 1-\a_m \right)^{2}$ can be decomposed in the following form:
                    \begin{eqnarray}
                    \left( 1-\a_m \right) &=& r_1\Bc_{\{ A_1 \}} + r_2\Bc_{\{ A_2 \}} + \dots + r_k\Bc_{\{ A_k \}}, \end{eqnarray}
                    %where
                    %\[ \gf_{m} = A_{1} \cup A_{2} \cup \dots \cup A_{n}. \]
                    %and
                    %\begin{eqnarray}
                    %\gf_{m-1} &=& \hat{A}_1^{(m-1)} \cup \hat{A}_2^{(m-1)} \cup \dots \cup \hat{A}_k^{(m-1)} \nonumber
                    %\end{eqnarray}
                    Consequently, the expectation can be written explicitly as follows
                    \begin{eqnarray}
                    E(\left(1-\a_{m}\right)^{2}|\gf_{m-1}) &=& E\left(  \left. \sum\limits_{i=1}^{k} r_i \Bc_{\{A_i \}} \right\vert \gf_{m-1} \right) \nonumber \\
                    & = & \sum\limits_{i=1}^{k}r_i E\left( \left. \Bc_{\{A_i\}} \right\vert \gf_{m-1} \right) \nonumber
                    =  \sum\limits_{i=1}^{k} r_i \bar{\m} \left(\left. A_i \right\vert \gf_{m-1}\right). \nonumber
                    \end{eqnarray}
                    In what follows, it is assumed that every $\s$-algebra $ \gf_{m-1} $ is finite, i.e. generated by finitely many random variables
                    $ \hat{A}_1, \hat{A}_2, \dots,
                    \hat{A}_l$. Then  the expectation with respect to $ \s- $algebra $ \gf_{m-1} $ is calculated by
                    \begin{eqnarray}
                    \bar{\m}(A_i|\gf_{m-1}) = \sum\limits_{j=1}^{l} \bar{\m}(A_i|\hat{A}_j) \ ; \ \ \  \bar{\m}(A_i|\hat{A}_j) = \dfrac{\bar{\m}(A_i \cap \hat{A}_j)}{\bar{\m}(\hat{A}_j)}. \nonumber
                    \end{eqnarray}
                    Therefore, one needs to compute the conditional measures with respect to all components $A_i$ and $\hat{A}_j$  in order to find the expectation.
                \end{rem}

          In this section, for the sake of simplicity, we consider a class q.s.o. $V_a:S^{1}\rightarrow S^{1}$ defined by
                \begin{eqnarray}\label{eqn_chsn_qso}
                P_{11,1}=a, \ P_{21,1}= P_{12,1}=0, \ P_{22,1}=0.
                \end{eqnarray}
                 Thus, $ P_{11,2}=1-a, P_{21,2} =P_{12,2}=1 $ and $ P_{22,2} =1 $. One easily can check $ V_a $ is indeed a class of
                 $b-$bistochastic q.s.o. and if $ 0\leq a <1 $ then $ V_{a} $  has unique fixed point $(0,1)$.
                 %which has unique fixed point $(0,1)$.
For this q.s.o. we have $E = \{1,2\}$, $ \xb = (x_1,x_2) \in S^{1}
$. One can calculate that
                \begin{eqnarray}\label{eqn_val_H}
                H_{11,\xb}^{[k,k+1]} = (ax_1)^{2^{k}}, \ \ H_{12,\xb}^{[k,k+1]} = 1- (ax_1)^{2^{k}}, \ \
                H_{21,\xb}^{[k,k+1]} = 0, \ \  H_{22,\xb}^{[k,k+1]} = 1.
                %\nonumber
                \end{eqnarray}

                \begin{prop}\label{prop_comp_mue}
                    Let $V_a$ be a $b-$bistochastic q.s.o. given by \eqref{eqn_chsn_qso}. Assume that $ \xb = (x_1,x_2) \in S^{1} $
                    and let $\m_{\xb,V_{a}}$ be a measure defined by \eqref{eqn_def_mea_on_cylnder_set}, then the following statements hold:
                    \begin{itemize}
                        \item[(i)] $ \m_{\xb,V_a}\left( A^{[l,m]}(1,\dots,1) \right) =a^{2^{m}-2^{l-1}}x_{1}^{2^{m}} $.
                        \item[(ii)] $\m_{\xb,V_a}\left( A^{[l,m]}(2,\dots,2) \right) =1-a^{2^{l-1}}x_{1}^{2^{l}} $.
                        \item[(iii)] $ \m_{\xb,V_a}\left( A^{[k,k+1]}(2,1) \right) =0 $ \textmd{for every} $ k\in\bn $.
                        \item[(iv)] $ \m_{\xb,V_a}\left( A^{[l,m]}  ( \underbrace{1,\dots,1}_{k+1-l}, 2,\dots,2 )\right)
                        = a^{2^{k}-2^{l-1}}x_{1}^{2^{k}}(1-(ax_1)^{2^{k}})$ for every $ k \in \{ l,l+1,\dots,m-1 \} $
                    \end{itemize}
                \end{prop}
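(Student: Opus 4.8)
The plan is to prove all four identities by a single direct computation: each item is obtained by substituting the transition entries \eqref{eqn_val_H} into the defining product formula \eqref{eqn_def_mea_on_cylnder_set} and collecting factors. First I would make \eqref{eqn_val_H} available by computing the iterates of $V_a$. From \eqref{eqn_chsn_qso} one has $V_a(\xb)_1 = a x_1^2$, so by an immediate induction $x_1^{(m)} = a^{2^m-1} x_1^{2^m}$ for all $m \ge 0$, while $x_2^{(m)} = 1 - x_1^{(m)}$ because $V_a^{(m)}(\xb)\in S^1$; inserting these into \eqref{eqn_H} reproduces \eqref{eqn_val_H}. The only features used afterwards are that $H_{11,\xb}^{[k,k+1]} = (a x_1)^{2^k}$ and $H_{12,\xb}^{[k,k+1]} = 1 - (a x_1)^{2^k}$, whereas $H_{21,\xb}^{[k,k+1]} = 0$ and $H_{22,\xb}^{[k,k+1]} = 1$.

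With \eqref{eqn_val_H} in hand, each statement is read off \eqref{eqn_def_mea_on_cylnder_set} by choosing the appropriate string of symbols and multiplying the corresponding factors. For (i) the string is constant $1$, so the mass is $x_1^{(l)}\prod_{k=l}^{m-1} H_{11,\xb}^{[k,k+1]} = x_1^{(l)}\prod_{k=l}^{m-1}(a x_1)^{2^k}$; evaluating the binary geometric sum $\sum_{k=l}^{m-1} 2^k = 2^m - 2^l$ and combining with the value of $x_1^{(l)}$ gives the assertion. For (ii) the string is constant $2$, every transition factor equals $H_{22,\xb}^{[k,k+1]} = 1$, so the mass is simply $x_2^{(l)} = 1 - x_1^{(l)}$. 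For (iii) the cylinder $A^{[k,k+1]}(2,1)$ contributes the single factor $H_{21,\xb}^{[k,k+1]} = 0$, hence its measure is $0$. For (iv) the string is a block of $1$'s of length $k+1-l$ followed by $2$'s, so the mass is $x_1^{(l)}\bigl(\prod_{j=l}^{k-1} H_{11,\xb}^{[j,j+1]}\bigr) H_{12,\xb}^{[k,k+1]} \prod_{j=k+1}^{m-1} H_{22,\xb}^{[j,j+1]} = x_1^{(l)}(a x_1)^{2^k - 2^l}\bigl(1 - (a x_1)^{2^k}\bigr)$, and simplifying the powers of $a$ and $x_1$ (again via $\sum_{j=l}^{k-1} 2^j = 2^k - 2^l$) yields the stated formula; the constraint $k\in\{l,\dots,m-1\}$ is precisely what makes the $1$-block-then-$2$-block pattern fit inside the index window $[l,m]$.

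I do not expect a genuine obstacle here: the whole argument is one substitution followed by elementary arithmetic with the binary sums $\sum 2^j$ and the closed form for $x_1^{(m)}$. The only point needing care is the boundary behaviour of the products — when $l=m$ the cylinder consists of a single symbol and the product over transition entries is empty, and when $k=l$ in (iv) there are no $H_{11}$ factors — so one should record the convention that empty products equal $1$ and check that the closed forms degenerate correctly (to $x_1^{(m)}$, $x_2^{(m)}$, etc.) in these cases.
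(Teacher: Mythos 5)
Your method is exactly the paper's: the paper's entire proof is the sentence ``The proof immediately follows from \eqref{eqn_val_H}'', and your plan --- compute the iterates of $V_a$, verify \eqref{eqn_val_H}, and multiply the entries according to \eqref{eqn_def_mea_on_cylnder_set} --- is precisely that argument, with the boundary conventions (empty products) handled more carefully than in the paper.

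There is, however, one unchecked step: you assert that combining $x_1^{(l)}=a^{2^l-1}x_1^{2^l}$ with $\prod_{k=l}^{m-1}(ax_1)^{2^k}=(ax_1)^{2^m-2^l}$ ``gives the assertion'', and it does not, at least not literally. The product is $a^{(2^l-1)+(2^m-2^l)}x_1^{2^m}=a^{2^m-1}x_1^{2^m}$, whereas item (i) states $a^{2^m-2^{l-1}}x_1^{2^m}$; these coincide only for $l=1$. The same mismatch occurs in (ii), where your (correct) computation yields $x_2^{(l)}=1-a^{2^l-1}x_1^{2^l}$ rather than $1-a^{2^{l-1}}x_1^{2^l}$, and in (iv), where the prefactor comes out as $a^{2^k-1}$ rather than $a^{2^k-2^{l-1}}$. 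The stated proposition is what one gets from the erroneous iterate $x_1^{(l)}=a^{2^{l-1}}x_1^{2^l}$, which already fails at $l=2$ (indeed $x_1^{(2)}=a(ax_1^2)^2=a^3x_1^4\neq a^2x_1^4$); your iterate is the correct one and is the only one compatible with \eqref{eqn_val_H}, since $H_{11,\xb}^{[k,k+1]}=ax_1^{(k)}=(ax_1)^{2^k}$ forces $x_1^{(k)}=a^{2^k-1}x_1^{2^k}$. So you should either correct the exponents in the statement or flag the discrepancy rather than declare the assertion proved; as written, the claim that the arithmetic yields the displayed formulas is false for $l\ge2$. Item (iii) and the combinatorial structure of (iv) are fine, and the slip is harmless for the paper's later use of the proposition, since only ratios such as $\m\bigl(A^{[l,m]}(1,\dots,1)\bigr)/\m\bigl(A^{[l,m-1]}(1,\dots,1)\bigr)=(ax_1)^{2^{m-1}}$ enter, in which the $l$-dependent prefactor cancels.
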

                The proof immediately follows from \eqref{eqn_val_H}.

                The following theorem is a main result of this section.

                \begin{thm}
                    Let $V_a$ be a $b-$bistochastic q.s.o. given by \eqref{eqn_chsn_qso} and
                    a measure $ \m_{\xb,V_{a}} $ be defined by \eqref{eqn_def_mea_on_cylnder_set}, then the following statements hold:
                    \begin{itemize}
                        \item[(i)] $ \m_{\xb,V_a} \sim \m_{\yb,V_a} $
                        \item[(ii)] $ \m_{\xb,V_{a_1}} \sim \m_{\xb,V_{a_2}}$
                        \item[(iii)] $ \m_{\xb,V_{a_1}} \sim \m_{\yb,V_{a_2}}$
                    \end{itemize}
                    where $ \xb = (x_1,x_2), \yb =(y_1,y_2) \in S^{1} $ and $ 0 \leq a,a_1,a_2\leq 1 $.%, \ a_1 \neq a_2 $.
                     \end{thm}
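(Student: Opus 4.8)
The plan is to apply the absolute-continuity criterion of Theorem~\ref{thm_test_re_m} by computing, for each of the three claims, the Radon--Nikodym ratios $\a_m = z_m/z_{m-1}$ along cylinder sets and showing that (a) the two-sided bound \eqref{thm_eqn_con_test} holds for suitable constants $A,B$, and (b) the series $\sum_m E((1-\a_m)^2\mid\gf_{m-1})$ converges $\bar{\m}$-a.e. Since $\bar{\m}\ll^{loc}\m$ will be immediate from the fact that both measures assign positive mass to exactly the same cylinders (by Proposition~\ref{prop_comp_mue}, a cylinder has measure zero under $\m_{\xb,V_a}$ precisely when it forces a ``$2\to 1$'' transition, independently of $\xb$ and $a$ as long as $x_1>0$, $a>0$; the boundary cases $x_1=0$ or $a=0$ must be handled separately since then the measure degenerates to the point mass at $(2,2,2,\dots)$ and equivalence is trivial or fails cleanly), it remains to control the ratios.

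The key computational step uses Proposition~\ref{prop_comp_mue} to write $z_m$ explicitly. For a cylinder $A^{[0,m]}(i_0,\dots,i_m)$ whose coordinates are $1$ up to position $k$ and then $2$ afterward, part~(iv) gives $\m_{\xb,V_a}(\cdot) = a^{2^k-1}x_1^{2^k}(1-(ax_1)^{2^k})$ when $k<m$ (and part~(i) gives $a^{2^m-1}x_1^{2^m}$ when all coordinates are $1$, i.e.\ $k\ge m$). Hence for claim~(i), with $\bar{\m}=\m_{\yb,V_a}$, $\m=\m_{\xb,V_a}$, one finds $z_m$ depends on $\w$ only through the location $k$ of the last $1$, and $\a_m = z_m/z_{m-1}$ equals $1$ on the set where $\w_m=2$ while $\w_{m-1}=2$ (the ``common tail''), equals a ratio of the form $(y_1/x_1)^{2^{m-1}}\cdot(\text{bounded factor})$ on the set where $\w_{m-1}=1,\w_m=2$, and so on; the point is that $1-\a_m$ is supported on the event $\{\w_{m-1}=1\}$, whose $\bar{\m}$-probability is $a^{2^{m-1}-1}y_1^{2^{m-1}}$ by part~(i), and this decays doubly exponentially. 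Therefore $E((1-\a_m)^2\mid\gf_{m-1})$, computed via the explicit recipe in Remark~\ref{rem_exptn}, is bounded by a constant times $a^{2^{m-1}}y_1^{2^{m-1}}$ (when $y_1<1$; the case $y_1=1$ forces $x_1=1$ and $\xb=\yb$), so the series converges and in fact $\bar{\m}\ll\m$; by symmetry $\m\ll\bar{\m}$, giving $\m_{\xb,V_a}\sim\m_{\yb,V_a}$. Claims~(ii) and~(iii) follow by the same scheme: in~(ii) the ratio involves $(a_2/a_1)^{2^{m-1}}$ times bounded factors and is again supported on a doubly-exponentially small event, and~(iii) combines the two by transitivity of $\sim$ (using (i) to pass from $\yb$ to $\xb$ at fixed $a_2$, then (ii) to pass from $a_2$ to $a_1$).

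The main obstacle I anticipate is verifying the uniform bound \eqref{thm_eqn_con_test}: on the ``bad'' cylinders the ratio $\a_m$ is not close to $1$ but rather of size $(y_1/x_1)^{2^{m-1}}$ or $(a_2/a_1)^{2^{m-1}}$, which blows up or collapses as $m\to\infty$ unless one is careful about which measure one is testing against. The resolution is that $\a_m$ is bounded above and below by absolute constants \emph{on a set of full $\m$-measure} precisely because the event where $\a_m$ is large has $\m$-measure zero in the relevant regime, or because one should apply the theorem in the direction for which the ratio stays in a fixed band --- i.e.\ one orders the pair so that the ``numerator'' measure has the smaller parameter. One then gets one inclusion directly and the reverse by swapping roles; since the ratio in the reverse direction is the reciprocal, the two-sided bound \eqref{thm_eqn_con_test} must be checked with the roles chosen so that $A,B$ exist, and the doubly-exponential decay of the relevant cylinder probabilities is what makes the $\sum E((1-\a_m)^2\mid\gf_{m-1})<\infty$ condition hold on a set of full measure. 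Aside from this bookkeeping, everything reduces to the closed-form expressions in Proposition~\ref{prop_comp_mue} and the conditional-expectation recipe in Remark~\ref{rem_exptn}, so no genuinely hard estimate is required.
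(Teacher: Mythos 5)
Your proposal follows essentially the same route as the paper's proof: apply the criterion of Theorem~\ref{thm_test_re_m}, compute $z_m$ and $\a_m=z_m/z_{m-1}$ on thin cylinders via the closed forms in Proposition~\ref{prop_comp_mue}, note that $1-\a_m$ is supported on events of doubly-exponentially small measure so that the series $\sum_m E((1-\a_m)^2\mid\gf_{m-1})$ converges almost everywhere, and then symmetrize; the paper likewise proves only (i) and disposes of (ii) and (iii) by the same argument, so your use of transitivity for (iii) is a harmless shortcut. One small slip worth noting: your claimed pointwise bound $E((1-\a_m)^2\mid\gf_{m-1})\le C\,a^{2^{m-1}}y_1^{2^{m-1}}$ fails on the all-ones cylinder when $x_1>y_1$, where the term $\bigl(1-(x_1/y_1)^{2^{m-1}}\bigr)^2(ax_1)^{2^{m-1}}$ can grow without bound, but --- exactly as in the paper's case analysis --- this only matters on the single trajectory $\w=(1,1,\dots)$, which is a null set, so the almost-everywhere convergence and hence your conclusion still stand.
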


                \begin{proof}  It is enough to prove (i), since the other cases can be proceeded by the same argument.
                    First, let us denote $ z_m = \frac{\m_{\xb,V_{a}}}{\m_{\yb,V_{a}}}$  and $\a_{m}=\frac{z_{m}}{z_{m-1}} $.
                    %\[ z_m = \dfrac{\m_{\xb,V_{a}}}{\m_{\yb,V_{a}}}, \ \ \ \a_{m}=\dfrac{z_{m}}{z_{m-1}}. \]
                    By definition, we let $\frac{0}{0}:=1$. By virtue of Proposition \ref{prop_comp_mue}, $ z_{m}(\w) $ can be written
                    as follows
                    \begin{eqnarray}
                    z_m(\w) &=&  \left(\dfrac{x_1}{y_1}\right)^{2^{m}} \Bc_{\{A^{[l,m]}(1,\dots,1)\}} +
                    \dfrac{\left(1-a^{2^{l-1}}x_1^{2^{l}}\right)}{\left(1-a^{2^{l-1}}y_1^{2^{l}}\right)} \Bc_{\{A^{[l,m]}(2,\dots,2)\}} \nonumber \\
                    &&+\sum\limits_{k=1}^{m-1} \Bc_{\{A^{[k,k+1]}(2,1)\}} +\sum\limits_{k=1}^{m-1} \dfrac{(x_1)^{2^{k}} (1-(ax_1)^{2^{k}})}{(y_1)^{2^{k}} (1-(ay_1)^{2^{k}})}\Bc_{\{A^{[l,m]}_{k}  (
                        1,\dots, \underbrace{1}_{k^{th}}, 2,\dots,2)\}}. \nonumber
                    \end{eqnarray}

                    Accordingly, $z_{m-1}$ is
                    \begin{eqnarray}
                    z_{m-1}(\w) &=&  \left(\dfrac{x_1}{y_1}\right)^{2^{m-1}} \Bc_{\{ A^{[l,m-1]}(1,\dots,1) \}} +
                    \dfrac{\left(1-a^{2^{l-1}}x_1^{2^{l}}\right)}{\left(1-a^{2^{l-1}}y_1^{2^{l}}\right)} \Bc_{\{\{A^{[l,m-1]}(2,\dots,2) \}} \nonumber \\
                    &&+\sum\limits_{k=1}^{m-2} \Bc_{\{A^{[k,k+1]}(2,1)\}} +\sum\limits_{k=1}^{m-2} \dfrac{(x_1)^{2^{k}} (1-(ax_1)^{2^{k}})}{(y_1)^{2^{k}} (1-(ay_1)^{2^{k}})}\Bc_{\{ A^{[l,m-1]}_{k}  (
                        1,\dots, \underbrace{1}_{k^{th}}, 2,\dots,2)\}}. \nonumber
                    \end{eqnarray}

                    Let $ A^{[k,k+1]}(i_{k},i_{k+1}) \cap A^{[m]}(i_{m}) = A^{[k,k+1,m]}(i_{k}, i_{k+1}, i_{m}) $.
                    One can see that $z_m$ and $z_{m-1}$, respectively, can be rewritten
                    as follows
                    \begin{eqnarray}
                    z_m(\w) &=&  \left(\dfrac{x_1}{y_1}\right)^{2^{m}} \Bc_{\{A^{[l,m]}(1,\dots,1)\}} +
                    \dfrac{\left(1-a^{2^{l-1}}x_1^{2^{l}}\right)}{\left(1-a^{2^{l-1}}y_1^{2^{l}}\right)} \Bc_{\{A^{[l,m]}(2,\dots,2)\}} \nonumber \\
                    &&+\sum\limits_{k=1}^{m-2} \Bc_{\{A^{[k,k+1]}(2,1)\}} + \Bc_{\{ A^{[m-1,m]}(2,1)\}} \nonumber \\
                    &&+\sum\limits_{k=0}^{m-2} \dfrac{(x_1)^{2^{k}} (1-(ax_1)^{2^{k}})}{(y_1)^{2^{k}} (1-(ay_1)^{2^{k}})}\Bc_{\{A^{[l,m]}_{k}  (
                        1,\dots, \underbrace{1}_{k^{th}}, 2,\dots,2)\}} \nonumber \\
                    &&+\dfrac{(x_1)^{2^{m-1}} (1-(ax_1)^{2^{m-1}})}{(y_1)^{2^{m-1}} (1-(ay_1)^{2^{m-1}})}\Bc_{\{ A^{[l,m]}_{m-1}(1,\dots,1,2)  \}}. \nonumber
                    \end{eqnarray}
                    %$  \mbox{$  \Large\chi $ }^2_0 = \frac{1}{2} $
                    %$ \scalebox{1.5}{$\chi$}^2_0 = \tfrac{1}{2}$

                    \begin{eqnarray}
                    z_{m-1}(\w) &=&  \left(\dfrac{x_1}{y_1}\right)^{2^{m-1}} \Bc_{\{ A^{[l,m]}(1,\dots,1) \}} +
                    \left(\dfrac{x_1}{y_1}\right)^{2^{m-1}} \Bc_{\{A^{[l,m]}_{m-1}(1,\dots,1,2)\}} \nonumber \\
                    && +\dfrac{\left(1-a^{2^{l-1}}x_1^{2^{l}}\right)}{\left(1-a^{2^{l-1}}y_1^{2^{l}}\right)} \Bc_{\{A^{[l,m]}  (
                        2,\dots,2, 1)\}} +
                        \dfrac{\left(1-a^{2^{l-1}}x_1^{2^{l}}\right)}{\left(1-a^{2^{l-1}}y_1^{2^{l}}\right)} \Bc_{\{A^{[l,m]}(2,\dots,2)\}} \nonumber \\
                    &&+\sum\limits_{k=1}^{m-2} \Bc_{\{A^{[k,k+1,m]}(2,1,1)\}} + \sum\limits_{k=1}^{m-2} \Bc_{\{A^{[k,k+1,m]}(2,1,2)\}}  \nonumber \\
                    &&+\sum\limits_{k=1}^{m-2} \dfrac{(x_1)^{2^{k}} (1-(ax_1)^{2^{k}})}{(y_1)^{2^{k}} (1-(ay_1)^{2^{k}})}\Bc_{\{A^{[l,m]}_{k}  (
                        1,\dots, \underbrace{1}_{k^{th}}, 2,\dots,2,1)\}}  \nonumber\\
                    &&+\sum\limits_{k=1}^{m-2} \dfrac{(x_1)^{2^{k}} (1-(ax_1)^{2^{k}})}{(y_1)^{2^{k}} (1-(ay_1)^{2^{k}})}\Bc_{\{A^{[l,m]}_{k}  (
                        1,\dots, \underbrace{1}_{k^{th}}, 2,\dots,2,2)\}}. \nonumber
                    \end{eqnarray}
                    Here, we have used a hint $ \Bc_{\{A^{[l,m-1]}(i_{l},\dots,i_{m-1})\}} =  \Bc_{\{A^{[l,m]}(i_{l},\dots,i_{m-1},1)\}}+  \Bc_{\{A^{[l,m]}(i_{l},\dots,i_{m-1},2)\}}$.

                    Now, we are ready to compute $\a_m$.
                    \begin{eqnarray}
                    \a_m(\w) &=& \left(\dfrac{x_1}{y_1}\right)^{2^{m-1}} \Bc_{\{A^{[l,m]}(1,\dots,1)\}} +
                    \dfrac{ 1-(ax_1)^{2^{m-1}}}{ 1-(ay_1)^{2^{m-1}}}\Bc_{\{A^{[l,m]}_{m-1}(1,\dots,1,2)\}} \nonumber \\
                    &&+\sum\limits_{k=1}^{m-2} \Bc_{\{A^{[k,k+1,m]}(2,1,1)\}} + \sum\limits_{k=1}^{m-2} \Bc_{\{A^{[k,k+1,m]}(2,1,2)\}} \nonumber \\
                    &&+\sum\limits_{k=1}^{m-2} \dfrac{(y_1)^{2^{k}} (1-(ay_1)^{2^{k}})}{(x_1)^{2^{k}} (1-(ax_1)^{2^{k}})}\Bc_{\{A^{[l,m]}_{k}  (
                        1,\dots, \underbrace{1}_{k^{th}}, 2,\dots,2,1)\}}  \nonumber \\
                    && +\dfrac{\left(1-a^{2^{l-1}}y_1^{2^{l}}\right)}{\left(1-a^{2^{l-1}}x_1^{2^{l}}\right)} \Bc_{\{A^{[l,m]}  (
                        2,\dots,2, 1)\}} + \Bc_{\{A^{[l,m]}  (
                        2,\dots,2,)\}}  \nonumber \\
                    && + \sum\limits_{k=1}^{m-2} \Bc_{\{A^{[l,m]}_{k}  (
                        1,\dots, \underbrace{1}_{k^{th}}, 2,\dots,2)\}}. \nonumber
                    \end{eqnarray}
                    We stress that, $ \a_m $ is bounded almost everywhere, except for the case $ \w = (\w_0 = 1, \w_1 = 1, \dots, ) $ and $ x_1 > y_1 $. Indeed, the measure $ \m_{\yb, V_{a}} $ at this point is zero. Therefore, one finds that $ \a_m $ satisfies the condition \eqref{thm_eqn_con_test} (see Theorem \ref{thm_test_re_m}). From
                    \begin{eqnarray}
                    1 &=&  \Bc_{\{A^{[l,m]}(1,\dots,1)\}} +  \Bc_{\{A^{[l,m]}_{m-1}(1,\dots,1,2)\}} +\sum\limits_{k=1}^{m-2} \Bc_{\{A^{[k,k+1,m]}(2,1,1)\}}
                    \nonumber \\ &&+
                    \sum\limits_{k=1}^{m-2} \Bc_{\{A^{[k,k+1,m]}(2,1,2)\}} + \sum\limits_{k=1}^{m-2} \Bc_{\{A^{[l,m]}_{k}  (
                        1,\dots, \underbrace{1}_{k^{th}}, 2,\dots,2,1)\}}  \nonumber \\
                    && + \Bc_{\{A^{[l,m]}  (
                        2,\dots,2, 1)\}} + \Bc_{\{A^{[l,m]}  (
                        2,\dots,2,)\}} + \nonumber   \sum\limits_{k=1}^{m-2} \Bc_{\{A^{[l,m]}_{k}  (
                        1,\dots, \underbrace{1}_{k^{th}}, 2,\dots,2)\}}. \nonumber
                    \end{eqnarray}
                    we find
                    \begin{eqnarray} \label{eqn_form_1-alp_sq_V_{a}}
                    (1-\a_m)^2
                    &=&\sum\limits_{k=1}^{m-2} \left( 1-  \dfrac{(y_1)^{2^{k}} (1-(ay_1)^{2^{k}})}{(x_1)^{2^{k}} (1-(ax_1)^{2^{k}})} \right)^{2}\Bc_{\{A^{[l,m]}_{k}  (
                        1,\dots, \underbrace{1}_{k^{th}}, 2,\dots,2,1)\}}  \nonumber \\
                    &&+ \left( 1- \dfrac{ 1-(ax_1)^{2^{m-1}}}{ 1-(ay_1)^{2^{m-1}}}  \right)^{2} \Bc_{\{A^{[l,m]}_{m-1}(1,\dots,1,2)\}} \nonumber \\
                    &&+\left( 1- \dfrac{1-y_1}{1-x_1} \right)^{2} \Bc_{\{A^{[l,m]}  (
                        2,\dots,2, 1)\}}  \\
                    &&+\left(  1-\left(\dfrac{x_1}{y_1}\right)^{2^{m-1}} \right) ^{2} \Bc_{\{A^{[l,m]}(1,\dots,1)\}}. \nonumber
                    \end{eqnarray}
                    %Next, we need to compute the expectation value for $(1-\a_m)^{2}$ given that $\gf_{m-1}$ (i.e $E((1-\a_m)^{2}| \gf_{m-1})$). Let us examine how we want to calculate this expectation.
                    %Denote sets
                    %\begin{eqnarray}
                    %r^{(m)} &= & (1-\a_m)^{2} \nonumber \\
                    %r_k & = & \left( 1- \sum\limits_{k=0}^{m-2} \dfrac{(y_1)^{2^{k}} (1-(ay_1)^{2^{k}})}{(x_1)^{2^{k}} (1-(ax_1)^{2^{k}})} \right)^{2} \nonumber \\
                    %r_{m-1} & = & \left( 1- \dfrac{1-(ax_1)^{2^{m-1}}}{1-(ay_1)^{2^{m-1}}}  \right)^{2} \nonumber \\
                    %r_{m} & = & \left( 1- \dfrac{1-ay_1^2}{1-ax_1^2} \right)^{2} \nonumber \\
                    %r_{m+1} & = & \left(  1-\left(\dfrac{x_1}{y_1}\right)^{2^{m-1}} \right) ^{2} \nonumber
                    %\end{eqnarray}
                    %and
                    %\begin{eqnarray}\label{eqn_ntn_set}
                    %A_{0,k}^{(m)} & = & \{i_{0} = \dots = i_k = 1, i_{k+1} = \dots = i_{m-1} = 2, i_m=1 \} \nonumber \\
                    %A_{0,m-1}^{(m)} & = & \{ i_{0} = \dots = i_{m-2} = i_{m-1} = 1, i_{m} = 2 \} \nonumber\\
                    %A_{0,m}^{(m)} & = & \{ i_{0}= \dots =i_{m-1} = 2, i_m=1 \} \nonumber\\
                    %A_{0,m+1}^{(m)} & = & \{ i_{0}=\dots =i_{m-1} = 1, i_m=1 \}  \\
                    %\th_{0,k}^{[]} & = & \{i_{0} = \dots = i_k = 1, i_{k+1} = \dots = i_{m-1} = 2 \} \nonumber \\
                    %\th_{0,m-1}^{(m-1)} & = & \{ i_{0} = \dots = i_{m-2} = i_{m-1} = 1 \} \\
                    %\th_{0,m}^{(m-1)} & = & \{ i_{0}= \dots =i_{m-1} = 2 \} \nonumber
                    %\ th^{(m)} & = & \bigcup\limits_{i=1}^{m}\th_i \nonumber
                    %\end{eqnarray}
                    %where $ m,k \in \bn, m\geq 1  $ and $k = \{ 1, 2, \dots, m-2\}$ is defined when $ m > 3 $.\\
                    The crucial point in this proof is to make proper partitions on $\w$ with regard to the convergence of the following series:
                    \begin{eqnarray}\label{eqn_seri}
                    \sum\limits_{m=1}^{\infty} E((1-\a_m)^{2}|\gf_{m-1}) (\w).
                    \end{eqnarray}

                    From \eqref{eqn_form_1-alp_sq_V_{a}}, one concludes that it is enough to compute the expectation on the events
                    \[ A^{[l,m-1]}_{k}(1,\dots, \underbrace{1}_{k^{th}}, 2, \dots, 2), \ A^{[l,m-1]}(1,\dots,1) \textmd{ and } A^{[l,m-1]}(2,\dots,2) \]
                    %It is clear that, the space $ \gf_{m-1} $ can be reduced to $ A^{[l,m-1]}_{k}(1,\dots, \underbrace{1}_{k^{th}}, 2, \dots, 2)$, $A^{[l,m-1]}(1,\dots,1) $ and $ A^{[l,m-1]}(2,\dots,2) $ only (see $ (1-\a_m)^{2} $).
                    By virtue of Remark \ref{rem_exptn}, we have
                    \[
                    \begin{array}{l}
                    \m_{\xb,V_{a}} \left( A^{[l,m]}_{k}(1,\dots, \underbrace{1}_{k^{th}}, 2, \dots, 2,1) \left\vert A^{[l,m-1]}_{k}(1,\dots, \underbrace{1}_{k^{th}}, 2, \dots, 2) \right.\right) = \\
                    = \dfrac{ \m_{\xb,V_{a}} \left( A^{[l,m]}_{k}(1,\dots, \underbrace{1}_{k^{th}}, 2, \dots, 2,1) \right) }{\m_{\xb,V_{a}} \left( A^{[l,m-1]}_{k}(1,\dots, \underbrace{1}_{k^{th}}, 2, \dots, 2) \right) } =0 \nonumber
                    \end{array}
                    \]
                    for any $ k\in\{ l,\dots, m-2 \} $. We note
                    that
                    \begin{eqnarray}
                    \m_{\xb,V_{a}} \left( A^{[l,m]}(1,\dots,1) \left\vert A^{[l,m-1]}(1,\dots,1) \right. \right)  &=& (ax_1)^{2^{m-1}} \nonumber \\
                    \m_{\xb,V_{a}} \left( A^{[l,m]}(2,\dots,2,1) \left\vert A^{[l,m-1]}(2,\dots,2) \right. \right)  &=&  0 \nonumber \\
                    \m_{\xb,V_{a}} \left( A^{[l,m]}_{m-1}(1,\dots,1,2) \left\vert A^{[l,m-1]}(1,\dots,1) \right. \right) & = & 1 - (ax_1)^{2^{m-1}} \nonumber
                    \end{eqnarray}
                    For other possibilities, the measures equal to zero, since the intersection between the sets are empty set.

                    Hence, the conditional expectation of $(1-\a_m)^{2}$ can be calculated as follows
                    \begin{eqnarray}\nonumber
                    E((1-\a_m)^{2}|\gf_{m-1}) & = & \left( 1- \dfrac{1-(ax_1)^{2^{m-1}}}{1-(ay_1)^{2^{m-1}}}  \right)^{2} \left( 1- (ax_1)^{2^{m-1}} \right)
                    \Bc_{\{A^{[l,m]}_{m-1}(1,\dots,1,2)\}} \nonumber \\
                    &&+ \left(1-\left(\dfrac{x_1}{y_1}\right)^{2^{m-1}}\right)^{2} \left( (ax_1)^{2^{m-1}} \right) \Bc_{\{A^{[l,m]}(1,\dots,1)\}}. \nonumber
                    \end{eqnarray}

                    Now we consider several cases.\\
                    \textbf{First Case}.
                    Consider the series
                    \begin{eqnarray}\label{eqn_ser}
                    \sum\limits_{m=1}^{\infty} \left( 1- \dfrac{1-(ax_1)^{2^{m-1}}}{1-(ay_1)^{2^{m-1}}}  \right)^{2} \left( 1- (ax_1)^{2^{m-1}} \right) \Bc_{\{ A^{[l,m]}_{m-1}(1,\dots,1,2) \}} (\w).
                    \end{eqnarray}
For the sake of simplicity, put
                    \[ K_m = \left(\dfrac{1-(ax_1)^{2^{m-1}}}{1-(ay_1)^{2^{m-1}}}  \right)^{2} \left( 1- (ax_1)^{2^{m-1}} \right). \]
To study the series \eqref{eqn_ser}, we examine two subcases.
First, we consider elements $\w$ in the following form:
                    \begin{eqnarray}\label{eqn_w_1,...,1,2,..}
                    \w = \{\underbrace{1,\dots, 1, 2}_{N+1}, i_{N+1}, i_{N+2}, \dots\}.
                    \end{eqnarray}
Then substituting $\w$ into \eqref{eqn_ser} one finds
                    \begin{eqnarray}
                    %\sum\limits_{m=1}^{\infty} E((1-\a_m)^{2} | \gf_{m-1}) & = & \sum\limits_{m=1}^{\infty} \left( 1- \dfrac{(x_1)^{-2} (1-(ax_1)^{2^{m}})}{(y_1)^{-2} (1-(ay_1)^{2^{m}})}  \right)^{2} \left( 1- (ax_1)^{2^{m}} \right) \l_{\{F_{m}\}} (\w) \nonumber \\
                    \sum\limits_{m=1}^{\infty}K_m  \Bc_{\{ A^{[0,m]}_{m-1}(1,\dots,1,2) \}}(\w) & = & \left( 1- \dfrac{ 1-(ax_1)^{2^{N-1}}}{1-(ay_1)^{2^{N-1}}}  \right)^{2} \left( 1- (ax_1)^{2^{N-1}} \right) \Bc_{\{A^{[0,N]}_{N-1}(1,\dots,1,2)\}}.\nonumber
                    \end{eqnarray}

If $\w$ is not in the form \eqref{eqn_w_1,...,1,2,..}),  then $
\sum_{m=1}^{\infty}K_m
\Bc_{\{A^{[0,m]}_{m-1}(1,\dots,1,2)\}}(\w)=0. $

                    These two subcases give us
                    \begin{eqnarray}
                    \m_{\xb,V_{a}} \left\{ \w \in \O \left\vert \sum\limits_{m=1}^{\infty}  \right. K_m  \Bc_{\{A^{[0,m]}(1,\dots,1,2)\}}(\w) < \infty \right\} = 1.\nonumber
                    \end{eqnarray}

\textbf{Second Case}. Now we examine the following series:
                    \begin{eqnarray}\label{eqn_ser_2}
                    \sum\limits_{m=1}^{\infty}\left(1-\left(\dfrac{x_1}{y_1}\right)^{2^{m-1}}\right)^{2} \left( (ax_1)^{2^{m-1}} \right)  \Bc_{\{A^{[0,m]}(1,\dots,1)\}}(\w),
                    \end{eqnarray}
                    Denote
                    \[ \hat{K}_m = \left(1-\left(\dfrac{x_1}{y_1}\right)^{2^{m-1}}\right)^{2} \left( (ax_1)^{2^{m-1}} \right).  \]

                    Taking $\w$ in the form of \eqref{eqn_w_1,...,1,2,..}, we find
                    %\[ \sum\limits_{m=1}^{\infty} \hat{K}_m \Bc_{\{F_{m}\}} =
                    %\sum\limits_{m=1}^{N-1}\left(1-\left(\dfrac{x_1}{y_1}\right)^{2^{m-1}}\right)^{2}  \left( (ax_1)^{2^{m-1}} \right) \Bc_{\{F_{m}\}}, \]
                    \[ \sum\limits_{m=1}^{\infty} \hat{K}_m \Bc_{\{A^{[0,m]}(1,\dots,1)\}} =
                    \sum\limits_{m=1}^{N-1} \hat{K}_m \Bc_{\{A^{[0,m]}(1,\dots,1)\}}, \]
                    which is finite. Otherwise, if $\w =(1,1,\dots, 1,\dots)$, then the series given by \eqref{eqn_ser_2} is diverged. In other cases, the series will be zero.
                    We can conclude that, the series  is converged almost everywhere except when $\w =(1,1,\dots, 1,\dots)$.

                    From the considered two cases above, we conclude that the series \eqref{eqn_seri} is converged almost everywhere.
                    Hence,
                    \begin{eqnarray}\label{eqn_series_m_x}
                    \m_{\xb,V_{a}} \left( \w \in \O \left\vert \sum\limits_{m=1}^{\infty} \right. E((1-\a_m)^{2} | \gf_{m-1}) (\w) < \infty  \right) = 1.
                    \end{eqnarray}
                    By repeating similar calculations by letting
                    $ z_{m}(\w) = \dfrac{ \m_{\yb,V_{a}} (\w) }{\m_{\xb,V_{a}}(\w) }  $
                    %$ z_m(i_{0},i_1,\dots, i_m) = \frac{ \m_{\yb,V_{a}}(i_0,i_1,\dots, i_m)}{ \m_{\xb,V_{a}}(i_0,i_1,\dots, i_m)}$,
                    one gets
                    \begin{eqnarray}\label{eqn_series_m_y}
                    \m_{\yb,V_{a}} \left( \w \in \O \left\vert \sum\limits_{m=1}^{\infty} \right. \left. E((1-\a_m)^{2} \right\vert \gf_{m-1}) (\w) < \infty \right) = 1.
                    \end{eqnarray}
                    Consequently, from \eqref{eqn_series_m_x} and \eqref{eqn_series_m_y}, we infer
                    that $ \m_{\xb,V_{a}} \sim  \m_{\yb,V_{a}}$.
                    This completes the proof.
                     \end{proof}

From this theorem, we are in a position to formulate the following
conjecture.

\begin{conj} Let $V$ be a regular q.s.o. Then for any $\xb,\yb\in
S^{n-1}$ the corresponding  Markov measures $\m_{\xb,V}$,
$\m_{\yb,V}$ are equivalent, i.e. $\m_{\xb,V} \sim \m_{\yb,V}$.

\end{conj}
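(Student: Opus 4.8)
The plan is to deduce the equivalence $\m_{\xb,V}\sim\m_{\yb,V}$ from the Shiryaev-type criterion recorded in Theorem \ref{thm_test_re_m}, applied in both directions $\m_{\xb,V}\ll\m_{\yb,V}$ and $\m_{\yb,V}\ll\m_{\xb,V}$. First I would take the natural filtration $(\gf_m)_{m\geq 1}$, where $\gf_m$ is the finite $\s$-algebra generated by the coordinates $\w_0,\dots,\w_m$; since $\gf=\s\left(\bigcup_m\gf_m\right)$, this is exactly the setting of Theorem \ref{thm_test_re_m} and Remark \ref{rem_exptn}. The preliminary task is to secure local absolute continuity in both directions. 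On a thin cylinder $A^{[0,m]}(i_0,\dots,i_m)$ the value of $\m_{\xb,V}$ is the product $x_{i_0}H^{[0,1]}_{i_0i_1,\xb}\cdots H^{[m-1,m]}_{i_{m-1}i_m,\xb}$ by \eqref{eqn_def_mea_on_cylnder_set}, so $\m_{\yb,V}\ll^{loc}\m_{\xb,V}$ requires that the one-step kernels $H^{[k,k+1]}_{ij,\xb}$ and $H^{[k,k+1]}_{ij,\yb}$ of \eqref{eqn_H} vanish simultaneously for every $k$. Since $H^{[k,k+1]}_{ij,\zb}=\sum_l P_{il,j}\zb_l^{(k)}$ vanishes precisely when $P_{il,j}=0$ for all $l\in Supp(\zb^{(k)})$, this holds as soon as $Supp(\xb^{(k)})=Supp(\yb^{(k)})$ for all $k$, which is the point where a support hypothesis (for instance $\xb,\yb\in ri\ S^{n-1}$, together with a check that the orbits keep matching supports) must enter.

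The key simplification is that the likelihood-ratio increment collapses to a ratio of one-step transition probabilities. Writing $z_m=\m_{\xb,V}/\m_{\yb,V}$ on $\gf_m$ and $\a_m=z_m/z_{m-1}$, telescoping the Markov products gives, for $\w$ with $\w_{m-1}=i$ and $\w_m=j$,
\[ \a_m(\w)=\frac{H^{[m-1,m]}_{ij,\xb}}{H^{[m-1,m]}_{ij,\yb}}. \]
Because $H^{[m-1,m]}_{ij,\zb}$ is linear in the current distribution, numerator and denominator differ only through $\xb^{(m-1)}-\yb^{(m-1)}$, whence on the cylinders where the denominator is positive
\[ |1-\a_m(\w)|=\frac{\left|\sum_l P_{\w_{m-1}l,\w_m}\left(\yb_l^{(m-1)}-\xb_l^{(m-1)}\right)\right|}{H^{[m-1,m]}_{\w_{m-1}\w_m,\yb}}\leq \frac{\norm{\xb^{(m-1)}-\yb^{(m-1)}}}{\d_{m-1}}, \]
where $\d_{m-1}$ is a lower bound for the relevant positive kernel entries. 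This single estimate does double duty: it verifies the boundedness hypothesis \eqref{thm_eqn_con_test} of Theorem \ref{thm_test_re_m} as long as the $\d_{m-1}$ stay bounded away from $0$, and it reduces the convergence of $\sum_m E\!\left((1-\a_m)^2\mid\gf_{m-1}\right)$ to the convergence of $\sum_m\norm{\xb^{(m-1)}-\yb^{(m-1)}}^2$.

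By regularity $V^{(m)}(\xb)\to\pb^*$ and $V^{(m)}(\yb)\to\pb^*$, so the triangle inequality yields $\norm{\xb^{(m)}-\yb^{(m)}}\to 0$; in combination with Theorem \ref{thm_asym_stb_qso} this already forces each term $E\!\left((1-\a_m)^2\mid\gf_{m-1}\right)$ to tend to $0$, which is the qualitative heart of the matter. Applying Theorem \ref{thm_test_re_m} in the direction $\m_{\xb,V}\ll\m_{\yb,V}$, and then the symmetric computation with $\xb$ and $\yb$ interchanged to obtain $\m_{\yb,V}\ll\m_{\xb,V}$, would give the asserted equivalence.

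The genuine difficulty, and the reason this is stated as a conjecture rather than a theorem, is that regularity supplies only $\norm{\xb^{(m)}-\yb^{(m)}}\to 0$, whereas Theorem \ref{thm_test_re_m} demands the strictly stronger $\sum_m\norm{\xb^{(m-1)}-\yb^{(m-1)}}^2<\infty$ almost everywhere. Closing this gap requires a \emph{quantitative} rate of convergence to $\pb^*$: if one can establish a geometric bound $\norm{V^{(m)}(\xb)-\pb^*}\leq C\l^m$ with $\l<1$ (as happens when $\pb^*$ is a hyperbolic attracting fixed point, cf.\ the eigenvalue computation $\{2P_{kn,k}\}$ and Corollary \ref{cor_fix_pt_att}), then $\sum_m\norm{\xb^{(m-1)}-\yb^{(m-1)}}^2$ converges and the argument closes. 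The second, subtler obstacle is the control of $\d_{m-1}$ and the verification of local absolute continuity on the exceptional cylinders where some kernel entry vanishes — the analogue of the singular ray $\w=(1,1,\dots)$ excised in the explicit $V_a$ computation. Handling these boundary orbits, and ruling out that they carry positive measure under one measure but null measure under the other, is precisely where the general statement is most likely to require additional hypotheses on $V$.
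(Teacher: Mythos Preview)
The statement you are attempting to prove is explicitly labelled a \emph{Conjecture} in the paper, and the paper provides no proof of it whatsoever; it is formulated immediately after the computation for the one-parameter family $V_a$ on $S^1$ as an open problem motivated by that special case. There is therefore no ``paper's own proof'' against which to compare your proposal.

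Your sketch is a reasonable outline of how one would \emph{try} to prove such a statement via Theorem~\ref{thm_test_re_m}, and you have yourself correctly isolated the two genuine obstructions: (i) regularity yields only $\norm{\xb^{(m)}-\yb^{(m)}}\to 0$, not the square-summability that the Shiryaev criterion demands, so an additional quantitative rate (e.g.\ geometric contraction near an attracting hyperbolic fixed point) would be needed; and (ii) the uniform lower bound $\d_{m-1}$ on the positive kernel entries is not guaranteed by regularity alone, and neither is the matching of supports $Supp(\xb^{(k)})=Supp(\yb^{(k)})$ required for local absolute continuity. These are not minor technicalities but precisely the reasons the authors leave the statement open. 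Your proposal is thus an honest discussion of the difficulties rather than a proof, and should be read as such.
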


%
%
%                \section{Acknowledgments}
%                The present study was conducted  with
%                the supports from the grants  FRGS14-135-0376 of Malaysian Ministry
%                of Education.


\begin{thebibliography}{92}
%\begin{thebibliography}{9}

\bibitem{bolka_pulka_mix_cls_qso} Bartoszek W., Pulka M., On mixing in the class of quadratic stochastic
operators, \textit{Nonlinear Anal.: Theory, Methods \& Appl.} {\bf
86} (2013), 95--113.

\bibitem{1} Bernstein S.N., The solution of a mathematical problem concerning the theory of heredity. \textit{Ucheniye-Zapiski N.-I. Kaf. Ukr. Otd. Mat.} 1 (1924), 83-115 (Russian).

\bibitem{SKF} Cornfeld I.P., Sinai Ya.G., Fomin S.V. \textit{Ergodic Theory}, Springer,
1982.


\bibitem{GanistogenQO} Ganihodzhaev N. N., On stochastic processes generated by quadric operators, \textit{J. Theor.
Probab.} {\bf 4} (1991), 639--653.



\bibitem{ganiMeaCorQO} Ganikhodzhaev N.N. Mukhitdinov, R. T.,
On a class of measures corresponding to quadratic operators,
\textit{Dokl. Acad. Nauk Resp. Uzb.} {\bf 3}(1995), 3--6.


\bibitem{def_qbo} Ganikhodzhaev, R.N. On the definition of quadratic bistochastic operators, \textit{Russ.
Math. Surv.} {\bf 48} (1993) 244--246.

\bibitem{6} Ganikhodzhaev R., Mukhamedov F., Rozikov U., Quadratic stochastic operators and processes: results and open problems,
\textit{Infin. Dimens. Anal. Quantum Probab. Relat.Top.} {\bf
14}(2011) 270-335.

\bibitem{inqlty} Hardy G.H., Littlewood J.E., and Polya G. \textit{Inequalities}, Cambridge University
Press, 1986

%\bibitem{7} Hofbauer J., Hutson V., Jansen W., Coexistence for systems governed by difference equations of Lotka-Volterra type, \textit{J. Math. Biol.} 25 (1987) 553-570.

\bibitem{8} Hofbauer J., Sigmund K., \textit{Evolutionary Games and Population Dynamics}, Cambridge Univ. Press, 1998.

%\bibitem{10} Lotka A.J., Undamped oscillations derived from the law of mass action, \textit{J. Amer. Chem.Soc.} 42 (1920), 1595-1599.

%\bibitem{lorenz} Lorenz, M. O.(1905) Methods of measuring the concentration of wealth. \textit{Publications of the American Statistical Association}, \textbf{9(70)} , 209-219.

\bibitem{K} Kesten H.  Quadratic transformations: A model for population growth. I. \textit{Adv Appl. Probab.}, {\bf 2}(1970) 1-82.

\bibitem{11} Lyubich Yu.I., \textit{Mathematical structures in population genetics}, Springer-Verlag, 1992.

\bibitem{Mar} Marshall A.W., Olkin I., Arnold B.C., \textit{Inequalities:Theory of Majorization and its Applications}, Springer, 2011.

%\bibitem{12} May R.M., Simple mathematical models with very complicated dynamics, \textit{Nature} 261 (1976) 459-467

%\bibitem{13} May R.M., Oster G.F., Bifurcations and dynamics complexity in simple ecological models, \textit{Am. Nat.} 110 (1976) 573-599.

\bibitem{farL1ErNonDiscMP} Mukhamedov F., On $L_1$-weak ergodicity of nonhomogeneous discrete Markov processes and its applications,
  \textit{Rev. Mat. Complut.} {\bf 26}(2013), 799--813.


\bibitem{farSupAkmaMarPQSP} Mukhamedov F., Supar N. A.,
On Marginal Processes of Quadratic Stochastic Processes,
\textit{Bull. Malay. Math. Sci. Soc.} {\bf 38}(2015), 1281--1296.

\bibitem{ME2015} Mukhamedov F., Embong A.F., On $b$-bistochastic quadratic
stochastic operators, \textit{J. Inequal. Appl.} {\bf 2015}
(2015), 226.

\bibitem{MG2015} Mukhamedov F., Ganikhodjaev N. \textit{Quantum Quadratic Operators and Processes},
Lect. Notes Math. {\bf 2133}(2015), Springer,  2015.


\bibitem{20} Narendra S.G., Samaresh C.M., Elliott W.M., On the Volterra and other nonlinear moldes of interacting populations,
\textit{Rev. Mod. Phys.} {\bf 43} (1971), 231-276.

\bibitem{21} Plank M., Losert V., Hamiltonian structures for the n-dimensional Lotka-Volterra equations, \textit{J. Math. Phys.} 36 (1995) 3520-3543.

\bibitem{pulkaMixnErPrNonMC} Pulka M., On the mixing property and the ergodic principle for nonhomogeneous Markov chains, \textit{Linear Alg. Appl.}
 {\bf 434} (2011), 1475--1488.


\bibitem{ganisarymsakovcenlimthmqc} Sarymsakov T.A., Ganikhodzhaev
N.N., Central limit theorem for quadratic chains, \textit{Uzbek
Math. Jour.} (1991), 54--57.

\bibitem{shir} Shiryaev A.N. \textit{Probability}, Springer-Verlag, New York, 1996.




\bibitem{25} Takeuchi Y., \textit{Global dynamicsal properties of Lotka-Volterra systems}, World Scientific, 1996.

%\bibitem{27} Ulam S.M., A collection of mathematical problems. \textit{Interscience Publ.} New York London, 1960.

%\bibitem{28} Vallander S.S., On the limit behavior of iteration sequence of certain quadratic transformations.\textit{ Soviet Math.} Doklady, 13(1972), 123-126.

\bibitem{29} Volterra V., Lois de fluctuation de la population de plusieurs esp`eces coexistant dans le m'eme milieu,
\textit{Association Franc. Lyon} 1926 (1927), 96-98 (1926).

%\bibitem{30} Volterra V., Lecons sur la theorie mathematique de la lutte pour la vie, \textit{Gauthiers Villars, Paris}, 1931.


\bibitem{greed} Parker D. S. and Ram P. \textit{Greed and majorization}, University of California (Los Angeles), Computer Science
Dep. 1996.


%
%\bibitem{greed4} Helman, P., Moret, B. M., and Shapiro, H. D. (1993). An exact characterization of greedy structures. \textit{SIAM Journal on Discrete Mathematics}, \textbf{6(2)}, 274-283.
%
%\bibitem{greed6}Korte, B., and  Lovasz, L. (1981, January). Mathematical structures underlying greedy algorithms. \textit{In Fundamentals of Computation Theory}, \textit{Springer Berlin Heidelberg}, pp. 205-209
%
%\bibitem{greed9} Lawler E.L. (1985) Submodular Functions and Polymatroid Optimization.  \textit{Combinatorial Optimization: Annotated Bibliographies}, 32-38.
%
%\bibitem{greed11} Lovasz, L. (1983). Submodular functions and convexity. \textit{In Mathematical Programming The State of the Art.  Springer Berlin Heidelberg}.(pp. 235-257).
%
%\bibitem{zylkaVojta} Zylka, Ch, and G. Vojta. (1991) Thermodynamic proofs of algebraic inequalities. \textit{Physics Letters A.} \textbf{152(3)} , 163-164.

%\bibitem{dalton} Dalton, H.(1920) The measurement of the inequality of incomes. \textit{The Economic Journal}, 348-361.




\end{thebibliography}
\end{document}